\newcommand{\tr}{\mbox{tr}}
\newcommand{\TryPackage}[3]{\IfFileExists{#1.sty}{\usepackage{#1}#2}{#3}
}
\newcommand{\al}{\alpha}
\newcommand{\bbi}{{{\bf i}}}
\newcommand{\bbj}{{{\bf j}}}
\newcommand{\bbk}{{{\bf k}}}
\newcommand{\ZZ}{{\mathbb Z}}
\newcommand{\RR}{{\mathbb R}}
\newcommand{\CC}{{\mathbb C}}
\newcommand{\QQ}{{\mathbb Q}}
\newcommand{\Hom}{\operatorname{Hom}}
\newcommand{\nat}{\natural}
\newcommand{\Real}{\operatorname{Re}}
\theoremstyle{definition}
\newtheorem{df}{Definition}[section]
\theoremstyle{plain}
\newtheorem{thm}[df]{Theorem}
\newtheorem{prop}[df]{Proposition}
\date{May 2, 2013}
\thanks{The first author gratefully acknowledges JSPS Grant-in-Aid for
Scientific Research (C) 23540113.  The second author gratefully acknowledges support from 
NSF  grant DMS-1007196.}
\address{Department of Mathematics, Indiana University \newline
\hspace*{.375in}  Bloomington, IN 47405} 
\email{\rm{yfukumot@fc.ritsumei.ac.jp}}
\email{\rm{pkirk@indiana.edu}}
\email{\rm{jpinzon@indiana.edu}}
\subjclass[2010]{Primary 57M27, 57R58, 57M25 ; Secondary 81T13} 
\keywords{pillowcase,  tangle, instanton, Floer homology, character variety, torus knot, binary dihedral, pretzel}
\begin{document}

\title {Traceless $SU(2)$ representations of 2-stranded tangles}

\author{Yoshihiro Fukumoto}
\author{Paul Kirk}
\author{Juanita Pinz\'on-Caicedo}
\begin{abstract} Given a 2-stranded tangle in a $\ZZ/2$ homology ball, $T\subset Y$,  we  investigate the character variety $R(Y,T)$ of conjugacy classes of traceless $SU(2)$ representations of $\pi_1(Y\setminus T)$.    In particular we  completely determine   the subspace of binary dihedral representations, and identify all of  $R(Y,T)$ for many tangles naturally  associated to knots in $S^3$.   Moreover, we determine the image of the restriction map from $R(T,Y)$ to  the traceless $SU(2)$ character variety of the 4-punctured 2-sphere (the {\it pillowcase}).  We give examples to show this image can be non-linear in general, and show it is linear for tangles associated to pretzel knots.    \end{abstract}

 \maketitle

 \section{Introduction}

 In \cite{HHK}, a study of the moduli spaces which arise in the construction of Kronheimer-Mrowka's reduced singular instanton knot homology (\cite{KM1,KM-khovanov,KM-filtrations}) naturally led to an exploration of a lagrangian intersection diagram associated to a decomposition of a pair $(Y,K)$, where $K$ is a knot in a closed 3-manifold $Y$, and $Y$ contains a {\em Conway sphere}, that is, a separating  2-sphere $S\subset Y$ which intersects $K$ transversally in four points.  To the  decomposition

\begin{equation}
\label{decom}
(Y,K)=(Y_0,T_0)\cup _{(S,\{a,b,c,d\})}(Y_1,T_1)
\end{equation}
one associates the diagram 
 \begin{equation}\label{SVKD}
\begin{diagram}\node[2]{R(S,\{a,b,c,d\})}\\
\node{R(Y_0,T_0)}\arrow{ne}\node[2]{R^\nat_\pi(Y_1,T_1)}\arrow{nw}\\
\node[2]{R_\pi^\nat(Y,K)}\arrow{nw}\arrow{ne}
\end{diagram}
\end{equation}
 where $R_\pi(M,L)$ denotes certain $\pi$-perturbed moduli spaces of singular flat $SU(2)$ connections which send meridians of $L$ to traceless matrices.  The points of the moduli space $R^\nat_\pi(Y,K)$ form the generators of the reduced instanton chain complex $CI^\nat(Y,K)$.
 See Figure \ref{fig45a} for the example corresponding to a tangle decomposition of the (4,5) torus knot.
 \bigskip

 In \cite{HHK}, 
 \begin{itemize}
\item $K\subset Y$ is usually taken to be a knot in $S^3$ and $T_1$ is the  trivial 2-stranded tangle in the 3-ball $Y_1=B^3$, so that $Y_0$ is the complementary 3-ball and $T_0$ is (in general) a non-trivial 2-stranded tangle. 
\item The space $R(S,\{a,b,c,d\})$ is identified with the {\em pillowcase}, a topological 2-sphere with four singular points arising as the quotient of the hyperelliptic involution on the torus.
\item An appropriate holonomy perturbation $\pi$ is constructed which makes $R^\nat_\pi(Y_1,T_1)$ regular (in fact a smooth circle) and the image of the restriction map $R^\nat_\pi(Y_1,T_1)\to  R(S,\{a,b,c,d\})$ is identified (see Figure \ref{pert}).
\end{itemize}

Completing the analysis of diagram \eqref{SVKD} is reduced to the study of the restriction map  $R(Y_0,T_0)\to  R(S,\{a,b,c,d\})$. This is a problem purely about the geometry of traceless representation varieties, as all the difficulties concerning singular connections and perturbations are contained in  the right side of the decomposition of diagram (\ref{SVKD}). Studying the space $R(Y_0,T_0)$ and the restriction map to the pillowcase leads  to some interesting geometry. This article, then, explores the problem of identifying the map on the upper left side of this diagram, for various knots $K$ in the 3-sphere.

 The analysis was carried out  completely in  \cite{HHK} for the case when $K$ is a 2-bridge knot and $Y_1$ is the 3-ball containing the two bridges. The situation for $K$ a $(p,q)$ torus knot was initiated in that article, and  for $p,q>2$ is quite complicated. In fact only the case of the $(3,4)$ torus knot was fully described. 
 
  Based on those few examples, it was speculated in \cite{HHK} that 
the space $R(Y_0,T_0)$ always contains a single arc of binary dihedral representations when $(Y_0,T_0)$ is a tangle obtained by removing a trivial tangle from a knot in $S^3$, and that the image of the restriction map $R(Y_0,T_0)\to R(S^2,\{a,b,c,d\})$ is linear (in the sense that the preimage in the universal orbifold cover $\RR^2\to R(S^2,\{a,b,c,d\})$  is contained in the union of straight lines).

 In this article we show that both of these assertions are {\em  false}. Indeed, we completely characterize the subspace of binary dihedral representations for any 2-stranded tangle,  and show that it can have arbitrarily many components.  We give many examples of torus knots for which the image in the pillowcase is non-linear.  We also identify another family of tangles, naturally associated to pretzel knots, for which the image {\em is} linear on both the binary dihedral and non-binary dihedral components of $R(Y_0,T_0)$.
 
   \medskip
 
  The restriction to traceless representations is motivated by Kronheimer-Mrowka's profound work; one can make more general definitions by restricting the trace of the meridians in other ways. In this article we  implicitly exploit a useful relationship between traceless $SU(2)$ representations of a knot or tangle complement and  $SO(3)$ representations  of the corresponding  2-fold branched cover.

A closely related and slightly simpler picture arises in the context of
Lin's theorem \cite{Lin} and its generalizations \cite{HK,Herald}. In that situation,  a signed count of irreducible traceless $SU(2)$ representations of $\pi_1(Y\setminus K)$ for $K$ a knot in a homology 3-sphere $Y$ is identified with $-4\lambda(Y)-\frac{1}{2}\sigma(K)$, where $\lambda(Y)$ denotes Casson's invariant and $\sigma(K)$ the signature of  $K$. The irreducible traceless representations correspond (when $R(Y_0,T_0)$ is suitably regular; see the discussion of \cite[Section7.2]{HHK}) to the intersection points in the pillowcase of  $R(Y_0,T_0)$ with  the interior of the arc $\{\gamma=\theta\}$.  

Also related is the work of Jacobsson-Rubinsztein \cite{JR}; their approach extends \cite{Lin,HK} by considering a knot as the closure of a braid, and identifying $R(Y\setminus K)$ as the fixed point set of the braid group element, viewed as a diffeomorphism of a $2n$-punctured 2-sphere, acting on the symplectic variety $R(S^2\setminus\{a_1,\cdots,a_{2n}\})$. In that work they aim to apply symplectic field theory methods to recover information about the Khovanov homology of $K$, extending the observations of \cite{Lew}.

 One can summarize the difference in approaches of the present article and \cite{JR,Lin,HK} 
as follows: in the present article (and \cite{HHK}), $(Y,K)$ is decomposed along a 4-punctured 2-sphere into a trivial 2-stranded tangle and a general tangle in a ($\ZZ/2$-homology) ball, whereas in  \cite{JR,Lin,HK} it is decomposed into  trivial $n$-stranded tangles in 3-balls, a trivial and a non-trivial braid, and the braid group element determines how these are glued. This distinction is analogous to the description of a 3-manifold in terms of surgery on a knot or  by a Heegaard splitting. One feature of the present approach is that it gives low dimensional pictures: immersed 1-complexes $R(Y_0,T_0)$ in the 2-dimensional pillowcase.

\medskip

  Independently of their relationship to instanton homology or lagrangian intersection, the geometry of the representation spaces $R(Y_0,T_0)$ and their restriction to the pillowcase  display a variety of interesting  features. 
  We are guided in our investigation by the rich parallel theory concerning $SU(2)$ (and $SL(2,\CC)$) character varieties of knot complements and their restriction to the  peripheral torus, as studied in many articles, including \cite{Klassen, Klassen-Whitehead, KK, Burde, Herald-thesis, ccgls}.

 \bigskip
 
 The contents of this article are as follows. In Section \ref{sec1}   we remind the reader of the results of \cite{HHK} and in Proposition \ref{symmetry}  find a $\left(\ZZ/2\right)^2$ action on $R(Y_0,T_0)$ and a $\ZZ/2$ action on $R(S^2,\{a,b,c,d\})$ and show that the restriction map is suitably equivariant.

  In Theorem \ref{bd} we completely characterize  the subspace $R^{tbd}(Y_0,T_0)\subset R(Y_0,T_0)$ of  traceless binary dihedral representations and show that the restriction to the pillowcase is linear on this subspace. In the following statement, $\tilde Y_0$ denotes the 2-fold   cover of $Y_0$ branched over $T_0$, and $A_-\subset H_1(\tilde Y_0)$ denotes the intersection of the torsion subgroup with the $-1$ eigenspace of the covering involution.  The pillowcase is  identified with the quotient of the affine action $(\gamma,\theta)\mapsto \pm (\gamma+2\pi m,\theta+2\pi n)$ of $\ZZ^2\ltimes \ZZ/2$ on $\RR^2$ with fundamental domain $[0,\pi]\times [0,2\pi]$.
 
 \medskip
 
 \noindent{\bf Theorem 3.2.}  {\em  The subspace   $R^{tbd}(Y_0,T_0)\subset R(Y_0,T_0)$ consisting of $SU(2)$ conjugacy classes of traceless binary dihedral representations is homeomorphic to the disjoint union of one arc and $\frac{n-1}2$ circles, where $n$ is the order of $A_-$.  
 
The arc maps to  the pillowcase  by the  map
\begin{equation*}
\label{bdpip}
t\in [0,\pi]\mapsto (\gamma,\theta)=(h(ba)t, (h(ba)-h(bc^{-1}))t)
\end{equation*}
and each circle maps to the pillowcase by the    map
\begin{equation*}
\label{bdpip2}
e^{t\bbi}\in S^1\mapsto(\gamma,\theta)=(h(ba)t, (h(ba)-h(bc^{-1}))t)-(  \ell_1,\ell_2),
\end{equation*}
where $h\in \Hom(\pi_1(\tilde Y_0),\ZZ)=H^1(\tilde Y_0)\cong \ZZ$ denotes the generator and $(\ell_1,\ell_2)$ depends on the circle.}

 \medskip

 In Section 4 we focus on certain natural tangles associated to  torus knots.
  We expand and clarify the results obtained in \cite{HHK}, and in particular show with explicit examples that the image in the pillowcase can be non-linear. The reader should glance at Figure \ref{fig45}, which illustrates $R(Y_0,T_0)$ and its image in the pillowcase for a tangle associated to the $(4,5)$ torus knot. This is an interesting knot from the perspective of instanton homology, Heegaard-Floer homology, and Khovanov homology, 
and its image as an immersed lagrangian subvariety of the pillowcase is not contained in a union of straight line segments.  Our calculations provide a rich collection of lagrangian intersection pictures which correspond to the symplectic analogue of Kronheimer-Mrowka's instanton homology of knots. We hope that future work, guided by these examples, will motivate the investigation of an Atiyah-Floer conjecture for instanton knot homology.  In this direction  we observe that in the  cases when the closure of the non-binary dihedral representations misses the binary dihedral representations in $R(Y_0,T_0)$ (including, for example, the $(3,7), (5,7), (5,12),$ and $ (5,17)$ torus knots) then all differentials in the reduced instanton chain complex vanish. We provide examples that show that $R(T_0,Y_0)$ and its image in the pillowcase can be very complicated.

  In Section 5 we analyze a family of tangles naturally associated to pretzel knots. We show how to  associate two slopes to each such tangle  such that that the non-binary dihedral part of  $R(Y_0,T_0)$  is mapped to straight lines of one slope, and the binary dihedral part maps to straight line segments of  the other slope.  
  We finish with a few comments  concerning extensions to $SL(2,\CC)$ character varieties and some speculation about differentials in the instanton complex.

  The authors thank Chris Herald and Matt Hedden for helpful conversations.  The first author was a visiting scholar of Department of Mathematics at
Indiana University Bloomington while this work was carried out. He wishes to thank the department for
its hospitality.
 
 \section{Representations of tangles}\label{sec1} Identify $SU(2)$ with the unit quaternions.  We recall the set up from \cite{HHK}.  Let $Y_0$ be a homology 3-ball
 $T_0\subset Y_0$ a 2-stranded tangle,  such that the first component of $T_0$ meets the boundary 2-sphere in points labelled $a,c$ and the second component meets in points labelled $b,d$, as illustrated in Figures \ref{torusknot} and \ref{pqrpretzel}. We abuse notation and denote the four meridian loops in $\pi_1(S^2\setminus \{a,b,c,d\})$ (with respect to a base point on the 2-sphere)    and their images in $\pi_1(Y_0\setminus T_0)$ also by $a,b,c,d$.  The base point is chosen so that   $$\pi_1(S^2\setminus \{a,b,c,d\})=\langle a,b,c,d~|~ba=cd\rangle.$$
 
In what follows, $(Y_1,T_1)$ will always denote the trivial 2-stranded tangle in the 3-ball $Y_1$, as illustrated in Figure \ref{trivial}; hence $\pi_1(Y_1\setminus T_1)$ is free on $a=d$ and $b=c$.

  \begin{figure}  \centering
\def\svgwidth{2in}
 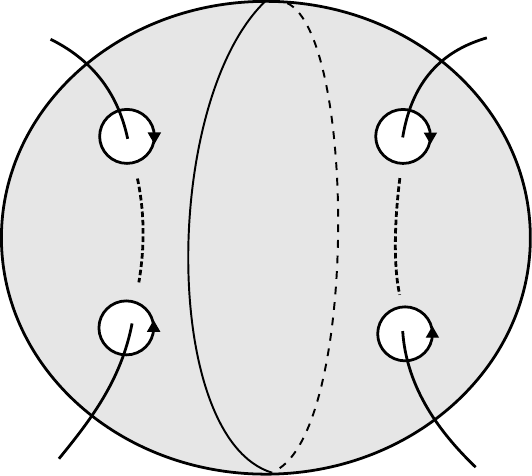
 \caption{The trivial tangle in the 3-ball, $(Y_1,T_1)$ \label{trivial}}
\end{figure}

\medskip

 We call an $SU(2)$  representation $\rho:\pi_1(Y_0\setminus T_0)\to SU(2)$ {\em traceless} if 
 $\rho$ send each meridian of $T_0$ to  the set (in fact conjugacy class) of purely imaginary quaternions (the trace on $SU(2)$ equals twice the real part of a quaternion).  
The space of conjugacy classes of traceless $SU(2)$ representations of $\pi_1(Y_0\setminus T_0)$ is denoted by $R(Y_0,T_0)$.   Similarly the space of conjugacy classes of traceless $SU(2)$ representations of $\pi_1(S^2\setminus \{a,b,c,d\})$ is denoted by $R(S^2, \{a,b,c,d\})$ and the
space of conjugacy classes of traceless $SU(2)$ representations of $\pi_1(Y_1\setminus T_1)$ is denoted by $R(Y_1,T_1)$.

The space $R(S^2, \{a,b,c,d\})$ is identified with the {\em pillowcase}; see \cite[Lemma 2.1]{Lin}  and \cite[Lemma 4.1]{HK}.    In \cite[Proposition 3.1]{HHK}  the map $\RR^2\to R(S^2, \{a,b,c,d\})$ taking the pair $(\gamma,\theta)$ to the conjugacy class of 
 \begin{equation}
\label{eqpil}a\mapsto\bbi, ~b\mapsto e^{\gamma\bbk}\bbi,~c\mapsto e^{\theta\bbk}\bbi~b\mapsto e^{(\theta-\gamma)\bbk}\bbi
\end{equation}
is shown to be a surjective quotient map. A fundamental domain is $[0,\pi]\times[0,2\pi]$ and the restriction 
$[0,\pi]\times[0,2\pi]\to R(S^2, \{a,b,c,d\})$ is given by the  identifications
$$(\gamma,0)\sim(\gamma,2\pi),~ (0,\theta)\sim(0,2\pi-\theta),~ (\pi,\theta)\sim(\pi,2\pi-\theta).$$

\medskip

The pillowcase is homeomorphic to a 2-sphere with four distinguished points corresponding to the abelian representations. In all the figures in this article we illustrate the pillowcase by  drawing the fundamental domain $[0,\pi]\times[0,2\pi]$; the image of the restriction map $R(Y_0,T_0)\to R(S^2, \{a,b,c,d\})$ is then illustrated as an immersed 1-complex in this rectangle.

\medskip

There are well defined restriction maps $R(Y_0,T_0)\to R(S^2, \{a,b,c,d\})$ and $R^\nat(Y_1,T_1)\to R(S^2, \{a,b,c,d\})$.  The present article concerns itself with the problem of identifying the image $R(Y_0,T_0)\to R(S^2, \{a,b,c,d\})$.  We mention, however, that $R^\nat(Y_1,T_1)$  is homeomorphic to a 2-sphere (\cite[Proposition 6.2]{HHK}) and  the restriction $R^\nat(Y_1,T_1)\to R(S^2, \{a,b,c,d\})$ is a quotient  map    with image the straight arc $\gamma=\theta$. The main result of \cite{HHK} is the identification of a holonomy perturbation $\pi$ so that $R^\nat_\pi(Y_1,T_1)$ is a circle and the restriction $R^\nat_\pi(Y_1,T_1)\to R(S^2, \{a,b,c,d\})$  an immersion with one double point,  illustrated in Figure \ref{pert}. 

The set of points in $R(Y_0,T_0)$ whose image in the pillowcase  intersect this circle  is a set of generators for the reduced instanton homology \cite{KM-khovanov}   of the knot obtained by gluing $Y_0$ to $Y_1$ and $T_0$ to $T_1$ (when $R(Y_0,T_0)$ is regular and transverse to $\{\gamma=\theta\}$).

Notice that whenever the restriction $R(Y_0,T_0)\to R(S^2, \{a,b,c,d\})$ is (locally) an immersion of a curve transverse to the arc $\{\gamma=\theta\}$, each intersection point with $\{\gamma=\theta\}$ gives rise to two intersection points with the  circle $R(Y_0,T_0)$, these two generators have relative $\ZZ/4$ grading $1$ in the instanton chain complex \cite{HHK}.

 The space $R(Y_0,T_0)$ is typically a singular 1 dimensional real analytic variety, homeomorphic to a finite 1-complex. The singular points (i.e. the vertices) avoid the arc $\{\gamma=\theta\}$, and the smooth locus is immersed, transverse to this arc.   In general the results of \cite{Herald-thesis} show that perturbations $\pi$ along curves in $Y_0\setminus T_0$  can always be found so that the perturbed space $R_\pi(Y_0,T_0)$ has these properties.

   \begin{figure}
\begin{center}
 \includegraphics[width=250pt]{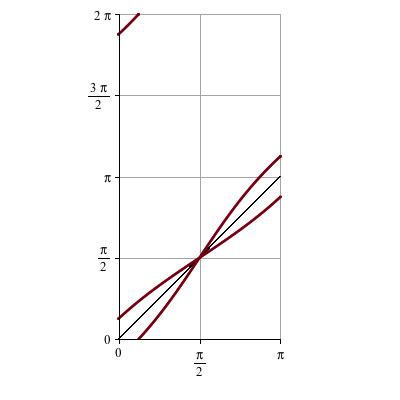}
 \caption{ \label{pert}}
\end{center}
\end{figure}

\subsection{Action of characters} The following  provides a convenient mechanism to simplify the analysis of the restriction map $R(Y_0,T_0)\to R(S^2, \{a,b,c,d\})$.

Given a character $\chi:\pi_1(Y_0\setminus T_0)\to\{\pm1\}$ and a traceless representation $\rho:\pi_1(Y_0\setminus T_0)\to SU(2)$, the product $\chi\cdot \rho$ is again a representation, since $\{\pm 1\}$ is the center of $SU(2)$.
Because $H_1(Y_0\setminus T_0)=\ZZ^2$ (generated by the meridians $a,b$), this gives a $\left(\ZZ/2\right)^2$ action on $R(Y_0,T_0)$. 

Similarly, the generators $a,b,c$ give an action of $\left(\ZZ/2\right)^3$ on $R(S^2,\{a,b,c,d\})$. The character $\chi:\pi_1(S^2\setminus \{a,b,c,d\})\to\{\pm1\}$ taking $a,b,c$ (and hence $d$) to $-1$ acts trivially on $R(S^2, \{a,b,c,d\})$, since its action is achieved by conjugation by $\bbk$ on the representation of Equation (\ref{eqpil}). Therefore,  the $\left(\ZZ/2\right)^3$ action descends to a $\left(\ZZ/2\right)^2$ action. This is generated by $(\gamma,\theta)\mapsto (\pi-\gamma,2\pi-\theta)$, corresponding to $\chi_1(a)=1=\chi_1(c), \chi_1(b)=-1$, and $(\gamma,\theta)\mapsto (\pi-\gamma, \theta+\pi)$, corresponding to $\chi_2(a)=1, \chi_2(b)=-1= \chi_2(c)$.  

The character $\chi_1$ extends to $\pi_1(Y_0\setminus T_0)$ since $a$ and $c$ are both sent to $1$.   The character sending $a,b,c,d$ to $-1$ extends to $\pi_1(Y_0\setminus T_0)$ and need not act trivially on $R(Y_0,T_0)$
This shows the following.

\begin{prop} \label{symmetry}The image of the restriction map $R(Y_0,T_0)\to R(S^2, \{a,b,c,d\})$  is invariant under the involution $(\gamma,\theta)\mapsto (\pi-\gamma,2\pi-\theta)$.  Moreover, multiplication by the character on $\pi_1(Y_0\setminus T_0)$ which sends every generator to $-1$ induces an involution on $R(Y_0,T_0)$ whose orbits are mapped to the same point by the restriction $R(Y_0,T_0)\to R(S^2, \{a,b,c,d\})$.\qed 
\end{prop}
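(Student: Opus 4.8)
The plan is to deduce both assertions from the \emph{naturality of the character action under restriction}, feeding in the two explicit pillowcase actions already computed in the paragraphs above. I need only two preliminary observations. First, since $H_1(Y_0\setminus T_0)=\ZZ^2$ is generated by the meridians $a,b$, every character $\chi\colon\pi_1(Y_0\setminus T_0)\to\{\pm1\}$ factors through $H_1$ and is determined by $\chi(a),\chi(b)$; moreover $c$ and $d$ are homologous to $a$ and $b$ respectively (they are meridians of the same tangle strands), so automatically $\chi(c)=\chi(a)$ and $\chi(d)=\chi(b)$ (the sign ambiguity in $[c]=\pm[a]$ is irrelevant for $\{\pm1\}$-valued $\chi$). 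Second, restriction is induced by the inclusion $S^2\setminus\{a,b,c,d\}\hookrightarrow Y_0\setminus T_0$, and multiplication by a central character is pointwise, so for any such $\chi$ and any traceless $\rho$ one has $(\chi\cdot\rho)|=(\chi|)\cdot(\rho|)$, where $\chi|$ is the pulled-back character. Thus the $\left(\ZZ/2\right)^2$ action on $R(Y_0,T_0)$ intertwines, through the restriction map, with the pillowcase action by the pulled-back characters.

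For the first assertion I would take the character $\chi_1$ with $\chi_1(a)=1$, $\chi_1(b)=-1$. By the first observation it is a genuine character of $\pi_1(Y_0\setminus T_0)$ (it forces $\chi_1(c)=\chi_1(a)=1$, matching its pillowcase values), so $\rho\mapsto\chi_1\cdot\rho$ preserves $R(Y_0,T_0)$. By the paragraph preceding the statement, $\chi_1|$ induces on the pillowcase precisely the involution $(\gamma,\theta)\mapsto(\pi-\gamma,2\pi-\theta)$. Combining this with the intertwining relation, whenever $\rho$ restricts to $(\gamma,\theta)$ the point $(\pi-\gamma,2\pi-\theta)$ is the restriction of $\chi_1\cdot\rho\in R(Y_0,T_0)$ and hence lies in the image; this gives the claimed invariance.

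For the second assertion I would take the character $\chi_{ab}$ sending every generator to $-1$; again by the first observation it is a legitimate character, and since $(\chi_{ab})^2$ is trivial and $-\bbi$ is still traceless, $\rho\mapsto\chi_{ab}\cdot\rho$ is an involution of $R(Y_0,T_0)$. Its pullback $\chi_{ab}|$ is the pillowcase character sending $a,b,c,d$ all to $-1$, which was observed above to act trivially (its effect is realized by conjugation by $\bbk$). By the intertwining relation the restrictions of $\rho$ and $\chi_{ab}\cdot\rho$ therefore coincide, so the two points of each orbit map to the same point of the pillowcase. I do not anticipate any genuine obstacle: the whole argument is the assembly of the extension criterion with the pillowcase computations of the preceding paragraphs, and the only place requiring real care is verifying that $\chi_1$ and $\chi_{ab}$ actually extend to $\pi_1(Y_0\setminus T_0)$ — which is exactly the homological condition $[c]=[a]$, $[d]=[b]$ — and keeping straight the direction of the intertwining (multiply-then-restrict versus restrict-then-multiply).
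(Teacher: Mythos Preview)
Your argument is correct and is essentially the same as the paper's: both proofs use the character action, observe that $\chi_1$ (with $\chi_1(a)=\chi_1(c)=1$, $\chi_1(b)=-1$) extends to $\pi_1(Y_0\setminus T_0)$ because $a$ and $c$ are homologous, and that the all-$-1$ character extends and restricts to the pillowcase character already shown to act trivially. You have merely made the intertwining relation $(\chi\cdot\rho)|=(\chi|)\cdot(\rho|)$ and the homological extension criterion more explicit than the paper does.
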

An illustration of Proposition \ref{symmetry} is given in Figure \ref{fig45}, where the $\left(\ZZ/2\right)^2$ action on $R(Y_0,T_0)$ is given by reflections through the horizontal and vertical axes.

\section{Traceless binary dihedral representations of a two stranded tangle}

In this section  $(Y_0,T_0)$ is an oriented  2-stranded tangle with $Y_0$ a  $\ZZ/2$ homology 3-ball. We will identify the binary dihedral part of $R(Y_0,T_0)$   and determine its image in the pillowcase.

Define the {\em binary dihedral group} to be the subgroup 
$$B=\{\cos a+\sin a\bbk\}\cup \{\cos b\bbi+\sin b\bbj\}\subset SU(2).$$

\begin{df} \label{tbddef} We call a representation
 $\rho:\pi_1(Y_0\setminus T_0)\to SU(2)$ 
 {\em traceless binary dihedral} if its image lies in $B$
  and takes all meridians  into  the component $\{\cos b\bbi+\sin b\bbj\}$.  Denote by $R^{tbd}(Y_0,T_0)\subset R(Y_0,T_0)$  the image of all such representations in the space  of all $SU(2)$ conjugacy classes of traceless representations.  Denote its complement by $R^n(T_0,Y_0)$, thus 
\begin{equation}\label{deco}  R(Y_0,T_0)=R^{tbd}(Y_0,T_0)\sqcup R^{n}(Y_0,T_0).
\end{equation}
 \end{df}

We point out one subtlety of this definition:  not all traceless $SU(2)$ representations with image in $B$ are conjugate in $SU(2)$ to traceless binary dihedral representations as we have defined them.  The important feature of the definition is that the composite of a traceless binary dihedral representation $\rho$ with the continuous surjection $B\to \ZZ/2$ takes every meridian of $T_0\subset Y_0$ to the non-trivial element $1\in \ZZ/2$.   However, if $(Y_0,T_0)$ is obtained from a knot   $K$ in a $\ZZ/2$ homology sphere $Y$ by removing a tangle $(Y_1,T_1)$, then all  traceless representations of $\pi_1(Y\setminus K)$ with image in $B$ restrict to traceless binary dihedral representations in the sense of Definition \ref{tbddef}.

\subsection{The 2-fold branched cover of $(Y_0,T_0)$}
 Fix an identification $\partial(Y_0,T_0)\cong (S^2,\{a,b,c,d\})$  so that the two endpoints of one strand correspond to the labels $a,c$ and the other strand to $b,d$, as in Figures \ref{torusknot} and \ref{pretzel2}.  Note that $H_1(Y_0\setminus T_0)\cong \ZZ\oplus\ZZ$, generated by the meridians $a=c$ and $b=d$.
 
Denote by $\alpha:\pi_1(Y_0\setminus T_0)\to \ZZ$ the homomorphism taking each oriented meridian  to $1$, and by $\bar\alpha:\pi_1(Y_0\setminus T_0)\to \ZZ/2$
 its reduction modulo 2.  Thus $\alpha(a)=\alpha(b)=\alpha(c)=\alpha(d)=1\in \ZZ$. Let  $f:(\tilde Y_0,\tilde T_0)\to (Y_0,T_0)$ denote the corresponding 2-fold branched cover.

The boundary of $\tilde Y_0$ is a torus.   Choose a base point on $\partial\tilde Y_0$ which lies over the base point of $S^2\setminus\{a,b,c,d\}$.   Lift the   two loops $bc^{-1}$ and $ba$ in $\pi_1(S^2\setminus\{a,b,c,d\})$ to $\pi_1(\partial \tilde Y_0\setminus \partial \tilde T_0)$; these form a symplectic basis of $H_1(\partial\tilde Y_0)$.

The first homology  $H_1(\tilde Y_0)$ is isomorphic to $ \ZZ\oplus A$ where $A$ is an odd order finite abelian group.  
 One way to see this is to first note that the image of the restriction
$H^1(\tilde Y_0;\RR)\to H^1(\partial\tilde Y_0;\RR)\cong\RR\oplus \RR$ is a half-dimensional subspace by the usual argument using Poincar\'e duality and the universal coefficient theorem. Hence   $H^1(\tilde Y_0;\ZZ)$ has rank at least one. Now
glue the trivial tangle $(Y_1,T_1)$ to $(Y_0,T_0)$ to obtain a knot $K$ in a $\ZZ/2$ homology sphere $Y$ and apply the argument of Casson-Gordon 
\cite[Lemma 2]{casson-gordon} to conclude that the 2-fold branched cover $\tilde Y$ is a $\ZZ/2$ homology 3-sphere, and hence $H_1(\tilde Y;\ZZ/2)=0$ and so $H^1(\tilde Y;\ZZ)=0$. Since $\tilde Y_1$ is a solid torus, the $\ZZ/2$ homology 3-sphere $\tilde Y$ is obtained by Dehn filling $\tilde Y_0$, which reduces the rank of $H^1(\tilde Y_0;\ZZ)$ by at most one.  Hence $H^1(\tilde Y_0)$ has rank exactly one and $H_1(\tilde Y_0)$ is isomorphic to $ \ZZ\oplus A$ where $A$ is an odd order finite abelian group.

The covering involution $\tau$ acts as multiplication by $-1$ on $H^1(\tilde Y_0)$. Indeed, given $x\in H_1(\tilde Y_0;\ZZ)$, then $\tau (x)+x$ is fixed by $\tau$. Since $Y_0$ is a $\ZZ/2$ homology ball, it is a $\QQ$ homology ball, and Theorem III.2.4 of \cite{bredon} shows that 
 $\tau (x)+x$ maps to zero    in  $H_1(\tilde Y_0; \QQ)$.  Thus $\tau:H_1(\tilde Y_0;\QQ)\to H_1(\tilde Y_0;\QQ)$  is given by multiplication by $-1$, and by the universal coefficient theorem this is true for $H^1(\tilde Y_0;\ZZ)$.
 
 The covering involution  preserves the torsion subgroup $A\subset H_1(\tilde Y_0)$. Since $A$ has odd order, $A$ splits as the direct sum of the positive and negative eigenspaces of $\tau$, 
 $$A=A_+\oplus A_-.$$
If $x\in H_1(\tilde Y_0)$ is any element of infinite order, then $\tau(x)=-x + a_+$ for some $a_+$ in the odd torsion group $   A_+$. Thus replacing $x$ by $x-\frac{1}{2}a_+$ we obtain an element $x$ of infinite order satisfying $\tau(x)=-x$.  Hence we obtain a decomposition (where $\ZZ_-$ denotes the integers with the non-trivial $\ZZ/2$ action)
$$H_1(\tilde Y_0)\cong H_1(\tilde Y_0)_+\oplus H_1(\tilde Y_0)_-=A_+\oplus (A_-\oplus \ZZ_-).$$

A similar argument applies to the boundary torus $\partial \tilde Y_0 $. In this case  $H_1(\partial Y_0;\ZZ)=0$, hence the covering involution acts as multiplication by $-1$ and so $H_1(\partial \tilde Y_0 )=H_1(\partial \tilde Y_0)_-$.   In particular, since the loops $bc^{-1}$ and $ba$ in $\pi_1(S^2\setminus\{a,b,c,d\})$ lift to a symplectic basis of $H_1(\partial(\tilde Y_0))$, the  classes they represent in $H_1(\partial(\tilde Y_0))$ satisfy $\tau([bc^{-1}])=-[bc^{-1}]$ and $\tau([ba])=-[ba]$.

In the special case when $Y_0$ is a $\ZZ$ homology ball, then Theorem III.2.4 of \cite{bredon} implies that  $H_1(\tilde Y_0)=H_1(\tilde Y_0)_-$ and $A=A_-$.  In particular if $(Y_0,T_0)$ is obtained from a knot in a homology 3-sphere by removing a tangle in a 3-ball, then $H_1(Y_0)=H_1(Y_0)_-$ and $A=A_-$.

\subsection{A slope associated to a 2-stranded tangle with marked boundary}\label{bdprel}

Choose one of the two generators $h\in H^1(\tilde Y_0)$ and consider it as a homomorphism $h:\pi_1(\tilde Y_0)\to \ZZ$.
Evaluating $h$ on the symplectic basis for $H_1(\partial \tilde Y_0)$ gives a pair of integers: 
$$
h(bc^{-1})\text{~and~} h(ba).
$$

Changing the sign of $h$ or  choosing  the other base point for $\partial \tilde Y_0$ changes the signs of both $h(bc^{-1})$ and $h(ba)$.  Moreover, $h(bc^{-1})$ and $h(ba)$  are not both zero since the image $H^1(\tilde Y_0)\to H^1(\partial \tilde Y_0)$ is non-trivial. 
Therefore ratio $h(ba)/h(bc^{-1})$ is a  well defined invariant of the pair $(Y_0,T_0)$ equipped with a marking of its boundary.

\subsection{Determination of binary dihedral representations}

In preparation for the statement of the following theorem, we summarize the assumptions, notation, and calculations form the previous sections: $Y_0$ is a $\ZZ/2$ homology 3-ball containing an oriented  2-stranded tangle $T_0$. 
We denote by $\alpha:\pi_1(Y_0\setminus T_0)\to \ZZ$ the unique homomorphism taking all oriented meridians of $T_0$ to $1$.  An identification $\partial (Y_0,T_0)\cong (S^2,\{a,b,c,d\})$ is given so that one component of $T_0$ meets the boundary 2-sphere in the points labelled $a,c$ and the other in the points labelled $b,d$. The same labels are used for the corresponding meridians of $S^2\setminus \{a,b,c,d\}$ and $Y_0\setminus T_0$.

The 2-fold branched cover of $Y_0$ branched over $T_0$ associated to the mod 2 reduction $\bar\alpha$ of $\alpha$ is denoted by  $\tilde Y_0$. The homology $H_1(\tilde Y_0)$ is isomorphic to $\ZZ\oplus A$ for $A$ an odd torsion group, and $h:
H_1(\tilde Y_0)\to \ZZ$ generates $H^1(\tilde Y_0)$.  The loops $bc^{-1}$ and $ba$ lift to $\tilde Y_0$.  The covering involution $\tau:\tilde Y_0\to \tilde Y_0$ splits $H_1(\tilde Y_0)$ and its torsion subgroup $A$ into  $\pm1$ eigenspaces, 
$$H_1(\tilde Y_0)=H_1(\tilde Y_0)_+\oplus H_1(\tilde Y_0)_-\cong A_+\oplus (A_-\oplus \ZZ_-).$$ In particular, $\tau^*(h)=-h$.

\medskip

 Note that $A_-$ and $\Hom(A_-,S^1)$ are isomorphic.    Consider the equivalence relation on $\Hom(A_-,S^1)$ which identifies a character $\ell:A_-\to S^1$ with its complex conjugate $\ell^{-1}$.  Since $A_-$ has odd order, the only fixed point of this involution is the trivial character.

\begin{thm}\label{bd}  The subspace   $R^{tbd}(Y_0,T_0)\subset R(Y_0,T_0)$ consisting of $SU(2)$ conjugacy classes of traceless binary dihedral representations is homeomorphic to the disjoint union of one arc and $\frac{n-1}2$ circles, where $n$ is the order of $A_-$. The path components are indexed by the equivalence classes in  $\Hom(A_-,S^1)/_{\ell \sim\ell^{-1}}$.  The arc corresponds to the trivial character, and the circles correspond to the non-trivial characters. 

The arc corresponding to the trivial character maps to  the pillowcase  by the map
\begin{equation*}
\label{bdpip}
t\in [0,\pi]\mapsto (\gamma,\theta)=(h(ba)t, (h(ba)-h(bc^{-1}))t)
\end{equation*}
where $h\in \Hom(\pi_1(\tilde Y_0),\ZZ)=H^1(\tilde Y_0)\cong \ZZ$ denotes the generator.

The circle corresponding to the non-trivial character $\ell$ maps to the pillowcase by the   map
\begin{equation*}
\label{bdpip2}
e^{t\bbi}\in S^1\mapsto(\gamma,\theta)=(h(ba)t, (h(ba)-h(bc^{-1}))t)-(\log \ell(a^{-1}b), \log\ell(a^{-1}c)).
\end{equation*}
\end{thm}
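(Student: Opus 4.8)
The plan is to understand traceless binary dihedral representations by transferring the problem to the 2-fold branched cover $\tilde Y_0$, where binary dihedral representations linearize into abelian data. First I would set up the correspondence: a traceless binary dihedral representation $\rho:\pi_1(Y_0\setminus T_0)\to B$ sends every meridian into the reflection component $\{\cos b\,\bbi+\sin b\,\bbj\}$, so the composite with the surjection $B\to\ZZ/2$ is exactly the mod-2 meridian homomorphism $\bar\alpha$. Consequently $\rho$ carries the index-2 subgroup $\pi_1(\tilde Y_0)=\ker\bar\alpha$ into the rotation subgroup $\{\cos a+\sin a\,\bbk\}\cong S^1$, i.e.\ restricts to a homomorphism $\hat\rho:\pi_1(\tilde Y_0)\to S^1$. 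The image actually lands in the $\tau$-antiinvariant part because the covering involution is realized inside $B$ by conjugation by $\bbi$, which inverts the rotation circle; this is what makes $\tau^*=-1$ on $H^1(\tilde Y_0)$ relevant. Thus $\hat\rho$ factors through $H_1(\tilde Y_0)_-\cong A_-\oplus\ZZ_-$, giving a pair: a character $\ell\in\Hom(A_-,S^1)$ and a real parameter $t$ recording where the $\ZZ_-$ generator goes.

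Next I would promote this to a homeomorphism of the full representation space. The key point is that, up to $SU(2)$-conjugacy, a traceless binary dihedral $\rho$ is determined by $\hat\rho$ together with the overall rotation ``angle'' $t\in[0,\pi]$, but one must account for the residual conjugation by elements of $B$ itself. Conjugation by $\bbi$ both inverts the circle (sending $\ell\mapsto\ell^{-1}$) and reflects $t$; this produces precisely the equivalence relation $\ell\sim\ell^{-1}$ and explains why the trivial character yields an arc (it is fixed, so $t$ ranges over a half-open/closed interval, giving $[0,\pi]$) while each nontrivial pair $\{\ell,\ell^{-1}\}$ yields a circle (the $S^1$ of $t$-values with no extra folding). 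Since $A_-$ has odd order, the only $\tau$-fixed character is trivial, so the count of path components is $1+\frac{n-1}2$, matching the claim. I would verify that the parametrization is genuinely a homeomorphism by checking that distinct $(t,\ell)$ up to the equivalence give non-conjugate representations and that every $\rho$ arises, using that $\pi_1(\tilde Y_0)$ surjects onto $H_1(\tilde Y_0)_-$ and that every such $S^1$-valued character extends.

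For the image in the pillowcase I would compute the restriction to the boundary. The meridians $a,b,c,d$ go to reflections $\cos b_*\,\bbi+\sin b_*\,\bbj=e^{b_*\bbk/?}\cdots$; more usefully, the boundary-parallel loops $ba$ and $bc^{-1}$ lie in $\ker\bar\alpha=\pi_1(\tilde Y_0)$, so their images are the rotations $e^{\hat\rho(ba)\bbk}$ and $e^{\hat\rho(bc^{-1})\bbk}$. Evaluating $\hat\rho=t\,h+(\log\ell)$ on the symplectic basis $\{ba,\,bc^{-1}\}$ and comparing with the pillowcase coordinates $(\gamma,\theta)$ from Equation~\eqref{eqpil}, where $\gamma$ and $\theta-\gamma$ record the relative rotation angles between $a,b$ and between $b,c$, yields the stated affine formulas. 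The arc has offset zero (trivial $\ell$); each circle is the same line translated by $(-\log\ell(a^{-1}b),-\log\ell(a^{-1}c))$, the evaluation of $\ell$ on the appropriate boundary classes.

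The main obstacle I expect is the conjugacy bookkeeping in the second paragraph: carefully tracking how the normalizer of the rotation circle inside $B$ (and inside $SU(2)$) acts, so as to pin down exactly the equivalence $\ell\sim\ell^{-1}$ and the precise parameter ranges (half-circle folding to $[0,\pi]$ versus full $S^1$) without over- or under-counting. The subtlety flagged after Definition~\ref{tbddef}---that not all $B$-valued traceless representations are conjugate to ones with meridians in the reflection component---means I must be disciplined about working with representations on the nose rather than merely up to $SU(2)$-conjugacy when identifying $\hat\rho$, and only quotient by the allowed $B$-conjugations at the end. The endpoint identifications on the pillowcase boundary (from the fundamental domain gluing) also need to be checked to confirm the arc genuinely closes up to an arc and the circles to circles under the quotient map $\RR^2\to R(S^2,\{a,b,c,d\})$.
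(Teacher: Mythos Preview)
Your proposal is correct and follows essentially the same approach as the paper: restrict $\rho$ to the index-2 subgroup to obtain an $S^1$-valued character factoring through $H_1(\tilde Y_0)/A_+\cong A_-\oplus\ZZ_-$, identify the residual $SU(2)$-conjugacy with the involution $\ell\sim\ell^{-1}$, and read off the pillowcase image from the values on the boundary loops $ba$ and $bc^{-1}$. The paper carries out your ``main obstacle'' by first normalizing to $\rho(a)=\bbi$ (so the remaining conjugation is by $e^{x\bbi}$, not by arbitrary elements of $B$) and then case-splitting on $B\cap e^{x\bbi}Be^{-x\bbi}$; the delicate case $x\equiv\frac{\pi}{4}\pmod{\frac{\pi}{2}}$, where this intersection is the quaternion 8-group, needs an extra argument exploiting that $A_-$ has odd order to conclude that even then $\rho_2=\bbi\rho_1(-\bbi)$.
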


\begin{proof}

To keep notation compact, we write $S^1= \{e^{x\bbk}~|~x\in \RR\}$ so that   the binary dihedral subgroup $B$ is the union $S^1\cup S^1\bbi$.   We  denote again by $h$  its composite with the Hurewicz map and the inclusion
$$h:\pi_1(\tilde Y_0\setminus \tilde T_0)\to 
H_1(\tilde Y_0\setminus \tilde T_0)\to H_1(\tilde Y_0)\xrightarrow{h} \ZZ.$$
Similarly, given  $\ell\in \Hom(H_1(\tilde Y_0), S^1)$, abuse notation  and also denote by $\ell $ the composition $$\pi_1(\tilde Y_0\setminus \tilde T_0)\to H_1(\tilde Y_0\setminus \tilde T_0)\to H_1(\tilde Y_0)\to S^1.$$

We introduce the notation 
$$X^{tbd}(Y_0,T_0)=\{\rho:\pi_1(Y_0\setminus T_0)\to B~|~ \rho(a)=\bbi, \rho(\mu)\in S^1\bbi ~\text{ for each meridian~}\mu\}$$
for the set of traceless binary dihedral representations satisfying $\rho(a)=\bbi$.   Any traceless binary dihedral representation $\rho:\pi_1(Y_0\setminus T_0)\to B$ may be  conjugated by  $e^{x\bbk}$ if necessary so that $\rho(a)=\bbi$.  Thus the natural map $X^{tbd}(Y_0,T_0)\to R^{tbd}(Y_0,T_0)$ is surjective.

\medskip

The action of the covering transformation $\tau$ on on $\pi_1(\tilde Y_0\setminus \tilde T_0)$, viewed as an index 2 subgroup of $\pi_1(Y_0\setminus T_0)$,  is given by conjugation by $a$.  Thus if $i:\pi_1(\tilde Y_0\setminus \tilde T_0)\to H_1(\tilde Y_0)$ denotes the composite of the inclusion and the Hurewicz map, $i(a\beta a^{-1})=\tau_*(i(\beta))$.  

For any $\beta\in \pi_1( Y_0\setminus  T_0)$, the product
$a^{-\alpha(\beta)}\beta$ lies in the subgroup $\pi_1(\tilde Y_0\setminus \tilde T_0)$, since 
$$\bar \alpha(a^{-\alpha(\beta)}\beta)=\bar \alpha(a^{-\alpha(\beta)})+\bar \alpha( \beta)=-\bar\alpha(\beta)+\bar \alpha( \beta)=0.
$$

\medskip

Given  $\ell\in \Hom(H_1(\tilde Y_0)/A_+, S^1)$, define the function:
 \begin{equation}\label{rhoell}
\rho_\ell:\pi_1(Y_0\setminus T_0)\to B,
~\rho_\ell(\beta)=\bbi^{\alpha(\beta)}\ell(a^{-\alpha(\beta)}\beta)
\end{equation}
Then,  since $\ell$ vanishes on $A_+$, $\ell(a\beta a^{-1})=\tau^*(\ell)(\beta)=\ell(\beta)^{-1}$,
\begin{eqnarray*}
\ell(a^{-\alpha(\beta_1\beta_2)}\beta_1\beta_2)&=&\ell(a^{-\alpha(\beta_2)}a^{-\al(\beta_1)}\beta_1a^{\alpha(\beta_2)}a^{-\alpha(\beta_2)}\beta_2)\\
&=&\ell(a^{-\alpha(\beta_2)} a^{-\al(\beta_1)}\beta_1 a^{\alpha(\beta_2)})\ell(a^{-\alpha(\beta_2)}\beta_2 )\\
&=&
\ell(a^{-\al(\beta_1)}\beta_1)^{(-1)^{-\alpha(\beta_2)}}\ell(a^{-\alpha(\beta_2)}\beta_2 )
\end{eqnarray*}
and so
\begin{eqnarray*}\rho_\ell(\beta_1\beta_2)&=&\bbi^{\alpha(\beta_1\beta_2)}
\ell(a^{-\alpha(\beta_1\beta_2)}\beta_1\beta_2)   \\
&=&\bbi^{\alpha(\beta_1)}\bbi^{\alpha(\beta_2)}\ell(a^{-\al(\beta_1)}\beta_1)^{(-1)^{-\alpha(\beta_2)}}\ell(a^{-\alpha(\beta_2)}\beta_2 )
\\
&=&\bbi^{\alpha(\beta_1)}\ell(a^{-\al(\beta_1)}\beta_1)\bbi^{\alpha(\beta_2)} \ell(a^{-\alpha(\beta_2)}\beta_2 )
=\rho_\ell(\beta_1)\rho_\ell(\beta_2).
\end{eqnarray*}
Hence $\rho_\ell$ is a homomorphism.  It satisfies 
$$\rho_\ell(a)=\bbi, ~\rho_\ell(b)=\bbi\ell(a^{-1}b), ~\rho_\ell(c)=\bbi\ell(a^{-1}c), ~\rho_\ell(d)=\bbi\ell(a^{-1}d).$$
If $\mu\in \pi_1(Y_0\setminus T_0)$ denotes any oriented meridian  of $T_0$,
$\alpha(\mu)=1$, so that $\rho(\mu)=\bbi \ell(a^{-1}\mu)$, which lies in $S^1\bbi$. Hence $\rho_\ell$ is a traceless binary dihedral representation. 
 
 \medskip

  Given and $\rho\in X^{tbd}(Y_0,T_0)$, the function 
 $$\ell_\rho:\pi_1(\tilde Y_0\setminus \tilde T_0)\to S^1,~ \ell_\rho(\beta)=\bbi^{-\alpha(\beta)}\rho(\beta)$$
is a homomorphism since $\bbi^{-\alpha(\beta)}=\pm 1$ for $\beta\in \pi_1(\tilde Y_0\setminus \tilde T_0)$. Moreover, $$\ell_\rho(\mu^2)=\bbi^{-2}\rho(\mu)^2=(-1)(-1)=1$$ for any meridian $\mu$ since $\rho$ is traceless binary dihedral. Hence
$\ell_\rho$ factors through $H_1(\tilde Y_0)$.

Suppose that $\beta\in H_1(\tilde Y_0)$ satisfies $\tau(\beta)=\beta$, i.e.~  $\beta\in A_+$.  Then $\bar\alpha(\beta)=0$, so that  $\rho(\beta)\in S^1$. Hence
 \begin{eqnarray*}
\ell_\rho(\beta)&=& \ell_\rho(a\beta  a^{-1})=\bbi^{-\alpha(a\beta a^{-1})}\rho(a\beta a^{-1})
 \bbi^{-\alpha(\beta)}\bbi \rho(\beta)(-\bbi)\\&=& \bbi^{-\alpha(\beta)}  \rho(\beta)^{-1}
=\bbi^{-\alpha(\beta)} \ell_\rho(\beta)^{-1}\bbi^{\alpha(\beta)}=\ell_\rho(\beta)^{-1}.
\end{eqnarray*}
Thus $\ell_\rho(\beta)=\pm1$. But since $\beta\in A_+$, an odd torsion group, this implies that $\ell_\rho(\beta)=1$. Therefore,  $\ell_\rho$ factors through  $H_1(\tilde Y_0)/A_+$.

  \medskip

Since $\ell(a^2)=1$ and $\alpha(\beta)$ is even for $\beta\in \pi_1(\tilde Y_0\setminus \tilde T_0)$,  
$$ \ell\rho_{\ell}(\beta)= \bbi^{-\alpha(\beta)}\bbi^{\alpha(\beta)}\ell(a^{-\alpha(\beta)}\beta)=\ell(a^{2n})\ell(\beta)=\ell(\beta).$$ 
Hence the composite  $\ell\mapsto \ell\rho_{\ell}$ equals the identity. 

If $\rho_1,\rho_2\in X^{tbd}(Y_0,T_0)$ satisfy $\ell_{\rho_1}=\ell_{\rho_2}$, then the restrictions of $\rho_1,\rho_2$  to $\pi_1(\tilde Y_0\setminus \tilde T_0)\to S^1$ are equal. But since the composites  of $\rho_1$ and $\rho_2$ with the surjection $B\to \ZZ/2$ are equal, it follows that $\rho_1$ and $\rho_2$ are equal.

We can summarize what has been established so far as follows. Taking conjugacy classes gives a surjection $X^{tbd}(Y_0,T_0)\to R^{tbd}(Y_0,T_0)$ and the maps $\rho\mapsto \ell_\rho$  and $\ell\mapsto \ell_{\rho_\ell}$ define inverse bijections
 \begin{equation}
\label{eq3.0}
X^{tbd}(Y_0,T_0)\cong  \Hom(H_1(\tilde Y_0)/A_+, S^1).
\end{equation}

It remains to analyze the effect of conjugation.
Suppose   that $\rho_1,\rho_2\in X^{tbd}(Y_0,T_0)$ are two representations   which map to the same point in $R^{tbd}(Y_0,T_0)$.   Since $\rho_i(a)=\bbi$, $\rho_2= e^{x\bbi}\rho_2 e^{-x\bbi}$ for some $x$.  Now,
$$B\cap e^{x\bbi}B e^{-x\bbi}=
\begin{cases} B &\text{ if } x\equiv 0\mod \frac\pi 2\\
 \{\pm 1,\pm\bbi,\pm\bbj,\pm\bbk\}& \text{ if } x\equiv \frac\pi 4\mod \frac\pi 2\\
 \{\pm 1,\pm\bbi\}&\text{~otherwise}.
\end{cases}
$$
We treat these cases one at a time and show that in each case, either $\rho_2=\rho_1$, or $\rho_2=\bbi \rho_1(-\bbi)$.  

For the first case, if $x\equiv 0\mod \pi$, then $\rho_1=\rho_2$. If $x\equiv \frac\pi 2\mod \pi$, then $\rho_2=\bbi \rho_1(-\bbi)$.
For the third case, $\rho_1$ and $\rho_2$ take their values in $B\cap e^{x\bbi}B e^{-x\bbi}=
 \{\pm 1,\pm\bbi\}=  e^{-x\bbi}B e^{x\bbi}\cap B$, which is fixed by conjugation by $e^{-x\bbi}$. Hence $\rho_2=\rho_1$.

Consider, then the second case. So $e^{x\bbi}= e^{\frac{n\pi}{4}\bbi}$ for some odd integer $n$.  Suppose that $\rho_2\ne \rho_1$. Choose $\beta_0\in \pi_1(Y_0\setminus T_0)$ so that $\rho_2(\beta_0)\ne\rho_1(\beta_0)$. Replacing  $\beta_0$ by $a\beta_0$ if necessary,  and noting that $\rho_1(a)=\bbi=\rho_2(a)$, we may assume that $\beta_0$ lies in  the subgroup $ \pi_1(\tilde Y_0\setminus \tilde T_0)$.  Since $\rho_1$ and $\rho_2$ take their values in $B\cap e^{x\bbi}B e^{-x\bbi}=
 \{\pm 1,\pm\bbi,\pm\bbj, \pm \bbk\}$ and $\bar \alpha(\beta_0)=0$, it follows that $\rho_2(\beta_0)=\pm \bbk$. Similarly $\rho_1(\beta_0)=\mp \bbk$ and so by reindexing if needed, 
$$\rho_1(\beta_0)=  \bbk, \rho_2(\beta_0)=-\bbk.$$

Given any $\beta\in \pi_1(\tilde Y_0\setminus \tilde T_0)$,
$$\ell_{\rho_1}(\beta)=(-1)^{-\alpha(\beta)/2}\rho_1(\beta).$$
The homomorphism $\ell_{\rho_1}$ takes values in the order 4 cyclic subgroup of $\{\pm 1,\pm\bbi,\pm\bbj, \pm \bbk\}$generated by $\bbk$, and factors through $H_1(\tilde Y_0)/A_+$ which is isomorphic to $\ZZ\oplus A_-$ where $A_-$ is an odd torsion group. Thus $A_-$  and hence $\ell_{\rho_1}$  factors through the generator $h:H_1(\tilde Y_0)\to \ZZ$ of $H^1(\tilde Y_0)\cong \ZZ$.  This implies that
$$\ell_{\rho_1}(\beta)= e^{ {m_1\pi} h(\beta)\bbk/2}$$ for some integer $m_1$. Since $\rho_1=\rho_{\ell_{\rho_1}}$, this in turn implies that  for any $\beta\in \pi_1(Y_0\setminus T_0)$,
\begin{equation}
\label{eq3.1}\rho_1(\beta)=\bbi^{\alpha(\beta)}e^{ {m_1\pi} h(a^{-\alpha(\beta)}\beta)\bbk/2}. 
\end{equation}

Similarly
\begin{equation}
\label{eq3.2}
\rho_2(\beta)=\bbi^{\alpha(\beta)}e^{ {m_2\pi} h(a^{-\alpha(\beta)}\beta)\bbk/2} 
\end{equation}   for some integer $m_2$. Since $\rho_1(\beta_0)=\bbk=-\rho_2(\beta_0)$, 
we see that $m_1$ and $m_2$ are odd, and that $ m_2\equiv -m_1 \mod 4$.
But then using Equation (\ref{eq3.1}) and (\ref{eq3.2}) we see that  $\rho_2=\bbi\rho_1(-\bbi)$.

Thus we have shown that in all three cases, if $\rho_1,\rho_2\in X^{tbd}(Y_0,T_0)$  map to the same point in $R(Y_0,T_0)$, then either $\rho_1=\rho_2$ or $\rho_2=\bbi\rho_1(-\bbi)$.

\medskip
If $\rho_2=\bbi\rho_1(-\bbi)$, then 
$$\ell_{\rho_2}(\beta)=\bbi^{-\alpha(\beta)}\bbi\rho_1(\beta)(-\bbi)=
\bbi^{-\alpha(\beta)}\bbi(
\bbi^{\alpha(\beta)}\ell_{\rho_1}(\beta))(-\bbi)=\ell^{-1}_{\rho_1}(\beta).
$$
 Hence the bijection of Equation (\ref{eq3.0}) sets up identifications
 $$\Hom(H_1(\tilde Y_0)/A_+,S^1)/_{\ell\sim \ell^{-1}} \cong X^{tbd}(Y_0,T_0)/_{\rho\sim \bbi \rho(-\bbi)}\cong R^{tbd}(Y_0,T_0)\subset R(Y_0,T_0).$$

 \medskip

Define an action of $S^1$ on $X^{tbd}(Y_0,T_0)$ by 
$$(e^{t\bbk}\cdot \rho)(\beta) = \rho(\beta)e^{th(a^{-\alpha(\beta)}\beta)\bbk}, ~ \beta\in \pi_1(Y_0\setminus T_0).
$$
The action is free since $h$ is takes the value $1$ on some $\beta\in H_1(\tilde Y_0)$.
The action induces the free $S^1$ action on $\Hom(H_1(\tilde Y_0)/A_+,S^1)$ given by
$$(e^{t\bbk}\cdot \ell)(\beta) = \ell(\beta)e^{th( \beta)\bbk},~ \beta\in H_1(\tilde Y_0).
$$

  Restricting to the torsion subgroup $A_-\subset H_1(\tilde Y_0)$ determines a map $$\Hom(H_1(\tilde Y_0)/A_+,S^1)\to \Hom(A_-,S^1).$$ The fibers of this map are exactly the    orbits of the $S^1$ action, since $h:H_1(\tilde Y_0)\to \ZZ$ vanishes on $A_-$. The restriction map is equivariant with respect to the involution $\ell\mapsto\ell^{-1}$.  
  
  Since $A_-$ has odd order, the involution $\ell\to \ell^{-1}$ acts on $\Hom(A_-,S^1)$ with precisely  one fixed point, namely the trivial homomorphism $\ell\equiv 1$.    The involution on the fiber over the trivial homomorphism corresponds to $h\mapsto -h$, and   hence has quotient an arc $[0,\pi]$, with $t$ corresponding to $\beta\mapsto \bbi^{\alpha(\beta)}e^{th(a^{-\alpha(\beta)}\beta)\bbk}$.
  Therefore, the quotient
$\Hom(H_1(\tilde Y_0)/A_+,S^1)/_{\ell\sim \ell^{-1}}$ is homeomorphic to one arc and $\frac{n-1}{2}$ circles, where $n$ is the order of $A_-$. The circles are indexed by the non-trivial characters on $A_-$, up to $\ell\sim \ell^{-1}$. 
  This   establishes the first part of the statement of Theorem \ref{bd}.

\medskip

Define integers $p,q$ by $$p=h(bc^{-1})\text{~and ~}q=h(ba).$$

Note that $a^2,b^2,c^2,$ and $d^2$ are in $ \pi_1(\tilde Y_0\setminus \tilde T_0)$ and in the kernel of $h$, since these loops are meridians to the branch set $\tilde T_0$.  One computes $(e^{t\bbk}\cdot \rho)(a)=\bbi$ and 
$$(e^{t\bbk}\cdot \rho)(b)=\rho(b)e^{th(a^{-1}b)\bbk}=
\rho(b)e^{-th(ba^{-1})\bbk}=\rho(b)e^{-(h(ba)+h(a^{-2}))t\bbk}=\rho(b) e^{-qt\bbk}=e^{qt\bbk}\rho(b)$$
since $h(a^2)=0$, $\rho(b)\in S^1\bbi$, and $h(ba)=q$.   Then since $\alpha(cb^{-1})=0$,
 \begin{eqnarray*}
(e^{t\bbk}\cdot \rho)(c)&=&(e^{t\bbk}\cdot \rho)(cb^{-1})(e^{t\bbk}\cdot \rho)(b) 
=\rho(cb^{-1})e^{th(cb^{-1})\bbk} e^{qt\bbk}\rho(b)
=
\rho(cb^{-1})e^{-pt \bbk} e^{qt\bbk}\rho(b)\\&=&\rho(cb^{-1})e^{(q-p)t \bbk} \rho(b)
=e^{(q-p)t \bbk} \rho(cb^{-1})\rho(b)=e^{(q-p)t \bbk} \rho(c).
\end{eqnarray*}
and similarly $(e^{t\bbk}\cdot \rho)(d)=e^{-pt\bbk}\rho(d)$.

Note that $\rho(b)=\rho(a)\ell_\rho(a^{-1}b)=\bbi\ell_\rho(a^{-1}b)$, and likewise 
$\rho(c)=\bbi\ell_\rho(a^{-1}c),~\rho(d)=\bbi\ell_\rho(a^{-1}d)$. Hence
\begin{eqnarray*}
(e^{t\bbk}\cdot \rho)(a)&=&\bbi,\\
(e^{t\bbk}\cdot \rho)(b)&=&e^{qt\bbk}\bbi\ell_\rho(a^{-1}b)=e^{(qt- \log(\ell_\rho(a^{-1}b))\bbk}\bbi\\
(e^{t\bbk}\cdot \rho)(c)&=& e^{(q-p)t\bbk}\bbi\ell_\rho(a^{-1}c)=e^{((q-p)t- \log(\ell_\rho(a^{-1}c))\bbk}\bbi,
\end{eqnarray*}
 and so the second part of the statement of Theorem \ref{bd} is proved. 
\end{proof}

The arc of traceless binary dihedral representations produced in Theorem \ref{bd} coincides with the arcs found for the tangles corresponding to 2-bridge knots in \cite[Theorem 10.2]{HHK} and by an ad-hoc argument for torus knots in  \cite[Proposition 11.2]{HHK}.

One simple way to produce examples with many binary dihedral components is to take the connected sum of $Y_0$ with an odd order lens space; this replaces $A_-$ by $A_-\oplus \ZZ/k$ for some odd $k$, and hence the number of circles in $R^{tbd}(Y_0,T_0)$  increases by $n\frac{k-1}{2}$, where $n$ denotes the order of $A_-$. But one can also increase the number of binary dihedral components for any tangle $(Y_0,T_0)$ without changing $Y_0$ as follows.  Let $T'_0$ be obtained from $T_0$ by taking the connected sum of $T_0$ with a small trefoil knot.    Then $\tilde Y_0$ is replaced by the connected sum $\tilde Y_0\# L(3,1)$, since the 2-fold branched cover of a trefoil is the lens space $L(3,1)$. Since $Y_0$ is a 3-ball, $A_-$ is replaced by $A'_-=A_-\oplus \ZZ/3$ and the number of circles of traceless binary dihedral representations increases  by $n$, where $n$ is the order of $A_-$.

 \section{torus knots}

\subsection{The tangle associated to a torus knot}  

We recall the analysis of $R(Y_0,T_0)$ for tangles associated to torus knots initiated  in \cite{HHK}.  Let $p,q$ be a relatively prime pair of integers and let $r,s$ be integers so that  $pr+qs=1$.

Figure \ref{torusknot} illustrates a $(p,q)$ torus knot $T_{p,q}$ in $S^3$. We view $S^3$ as $\frac{q}{r}$ and $-\frac{s}{p}$ Dehn surgery on the two components of a Hopf link.  The knot  $T_{p,q}$ is isotopic to a curve parallel to the first component. In the figure, $T_{p,q}$ has been isotoped so that it meets a 3-ball in a trivial tangle $(Y_1,T_1)$.

  \begin{figure}
\begin{center}

\def\svgwidth{3in}

 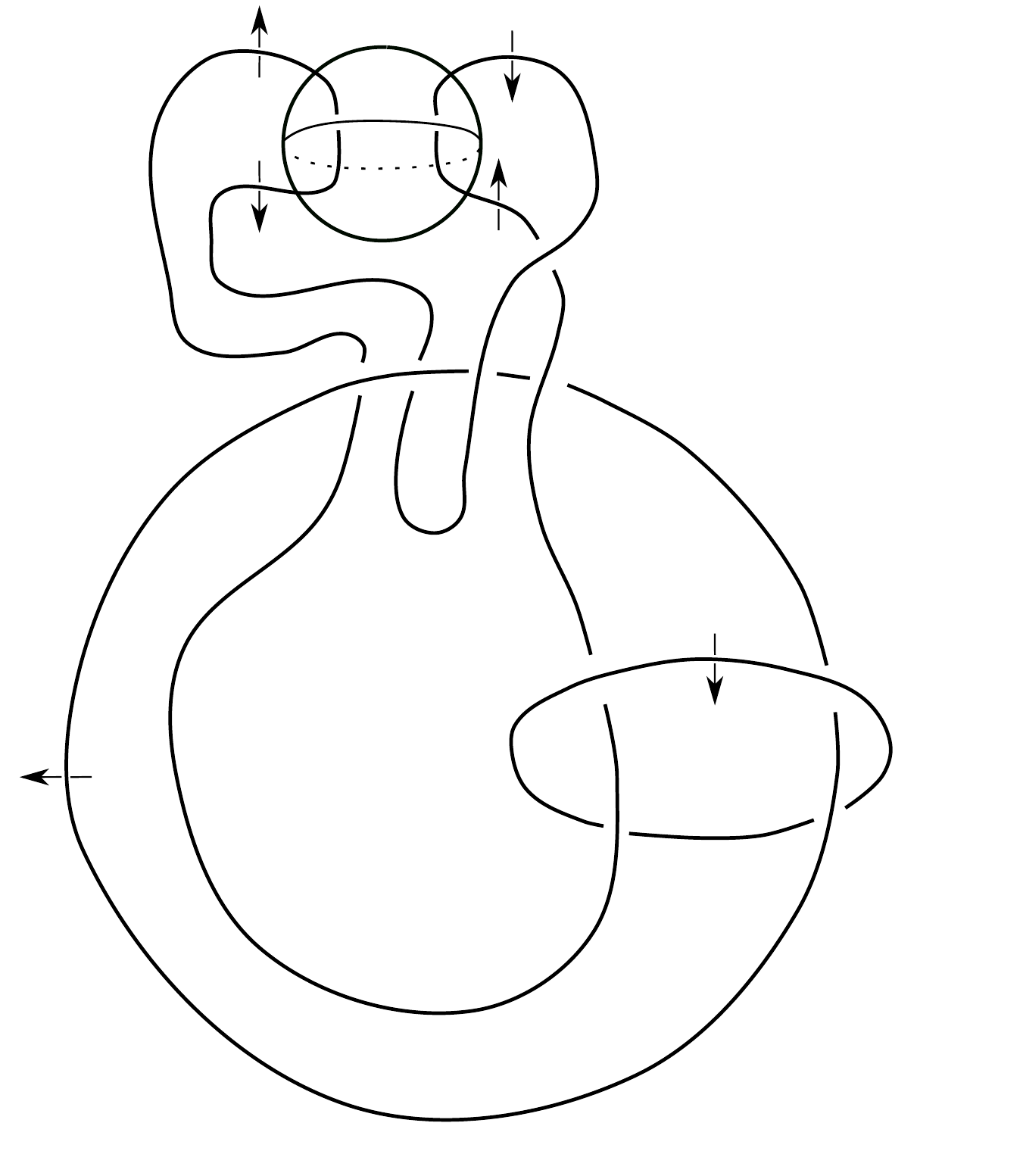
 \caption{The $(p,q)$ torus knot \label{torusknot}}
\end{center}
\end{figure} 
Let $Y_0$ be the exterior 3-ball, containing the 2-stranded tangle $T_0$.  
Consider $x,y,a,b,c,d$, the generators of $\pi_1(Y_0\setminus  T_0)$ illustrated in Figure \ref{torusknot}.
Set
 $$A=(xb)^qy^r \ \ \text{and}\  \ B=(d\bar a y)^{-s} x^p.$$
 Then $\pi_1(Y_0\setminus  T_0)$ is the free group on $A,B$ and 
 \begin{equation}
\label{words}
a=A^{s+p}B^{q-r}, ~b=B^{-r}A^s, ~ c=B^{-r}A^{s+p}B^q=B^{-r}aB^r,~
d=B^{-q}A^sB^{q-r}=B^{-(q-r)}bB^{q-r}.
\end{equation}

 Since $\pi_1(Y_0\setminus  T_0)$ is free on $A$ and $B$,
 the assignment $A\mapsto M, B\mapsto N$ gives a representation for any pair $M,N\in SU(2)$, but a general such assignment will not  yield a traceless representation.

\vfill\eject

\subsection{Binary dihedral representations for torus knots}

\begin{thm}\label{thm3.1}  For the tangle $(Y_0,T_0)$ obtained by removing a 3-ball from the $(p,q)$ torus knot as illustrated in Figure \ref{torusknot}, the subspace $R^{tbd}(Y_0,T_0)\subset R(Y_0,T_0)$ is an arc which maps to the pillowcase by 
$$ t\in [0,\pi]\mapsto  (\gamma,\theta)=  
 \begin{cases} (t,2t) &\text{ if } p,q,r \text{ are odd,}\\
 (t,0)&\text{ if } p,q  \text{ are odd and } r \text{ is even,}\\
((q-2r)t, -2rt) & \text{ if } p  \text{ is even and } q \text{ is odd,}\\
((2s+p)t, (2s+2p)t) & \text{ if } p  \text{ is odd and } q \text{ is even.}
 \end{cases}$$
\end{thm}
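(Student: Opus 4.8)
The plan is to apply Theorem~\ref{bd}, which reduces the statement to two tasks: showing that the torsion group $A_-$ is trivial (so that $R^{tbd}(Y_0,T_0)$ is a single arc and no circles appear), and computing the two integers $h(ba)$ and $h(bc^{-1})$ for the generator $h\in H^1(\tilde Y_0)\cong\ZZ$. Once these are known, the image in the pillowcase is read off directly from the arc formula $t\mapsto(\gamma,\theta)=(h(ba)t,(h(ba)-h(bc^{-1}))t)$ of Theorem~\ref{bd}. The four cases of the statement will then correspond exactly to the four possible pairs $(h(ba),h(bc^{-1}))$, namely $(1,-1)$, $(1,1)$, $(q-2r,q)$, and $(2s+p,-p)$.

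First I would record the behaviour of $\alpha$ on the free basis. Applying $\alpha$ to the relations of Equation~\eqref{words} and using $\alpha(a)=\alpha(b)=1$ gives the system $(s+p)\alpha(A)+(q-r)\alpha(B)=1$ and $s\alpha(A)-r\alpha(B)=1$; subtracting yields $p\alpha(A)+q\alpha(B)=0$, and since $\gcd(p,q)=1$ together with $pr+qs=1$ this forces $\alpha(A)=q$ and $\alpha(B)=-p$. Hence $\bar\alpha(A)=q\bmod 2$ and $\bar\alpha(B)=p\bmod 2$, which is the data controlling the covering. The index-two subgroup $\pi_1(\tilde Y_0\setminus\tilde T_0)=\ker\bar\alpha$ is free of rank three by Nielsen--Schreier, and I would produce an explicit free basis via Reidemeister--Schreier, taking the coset representative to be whichever of $A,B$ has odd $\bar\alpha$-value.

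Next, $H_1(\tilde Y_0)$ is obtained by abelianizing this rank-three group and killing the classes of $a^2,b^2,c^2,d^2$, since each meridian $\mu$ of $T_0$ satisfies $\bar\alpha(\mu)=1$ while $\mu^2\in\ker\bar\alpha$ represents a meridian of the branch locus $\tilde T_0$. Carrying out this quotient, I would verify that the torsion $A=A_-$ vanishes and that $H_1(\tilde Y_0)\cong\ZZ$, so that Theorem~\ref{bd} yields precisely one arc; the trefoil $T_{2,3}$, for which $\tilde Y_0$ is a solid torus inside $L(3,1)$, serves as a consistency check. The generator $h$ is the projection onto this $\ZZ$, normalized up to the overall sign that Theorem~\ref{bd} already renders irrelevant. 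Finally I would substitute the words $ba=B^{-r}A^{2s+p}B^{q-r}$ and $bc^{-1}=B^{-r}A^{s}B^{-q}A^{-(s+p)}B^{r}$ (both of which lie in $\ker\bar\alpha$, as $\alpha(ba)=2$ and $\alpha(bc^{-1})=0$) into the formula for $h$ and evaluate.

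The main obstacle will be the bookkeeping across parities: the coset representative, the Schreier generators, and hence the explicit form of $h$ all depend on the residues of $p,q\bmod 2$, and within the case $p,q$ both odd the values $h(ba),h(bc^{-1})$ depend further on the parity of $r$. This is exactly the source of the four cases, and keeping the normalization and sign of $h$ consistent while evaluating it on $ba$ and $bc^{-1}$ in each case is the delicate point. As an independent cross-check I would note that the pair $(h(bc^{-1}),h(ba))$ encodes the rational longitude of $\tilde Y_0$, i.e.\ the kernel of $H_1(\partial\tilde Y_0)\to H_1(\tilde Y_0)/\text{torsion}\cong\ZZ$, which can in principle be read off from the Seifert structure of $\tilde Y_0$ and must reproduce the four listed cases.
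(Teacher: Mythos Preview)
Your proposal is correct and follows essentially the same route as the paper: apply Theorem~\ref{bd}, compute $\bar\alpha$ on the free generators $A,B$ (the paper derives the same mod-2 relations $p\bar\alpha(A)+q\bar\alpha(B)=0$ and $s\bar\alpha(A)-r\bar\alpha(B)=1$), run Reidemeister--Schreier on the corresponding double cover of $S^1\vee S^1$ to get a rank-three free basis for $\pi_1(\tilde Y_0\setminus\tilde T_0)$, kill the lifts of $a^2,b^2$ to obtain $H_1(\tilde Y_0)\cong\ZZ$, and then evaluate $h$ on the lifts of $ba$ and $bc^{-1}$ case by case. The paper only treats the case $p$ even in detail and asserts the others are similar; your anticipated pairs $(h(ba),h(bc^{-1}))=(1,-1),(1,1),(q-2r,q),(2s+p,-p)$ are exactly what produce the four displayed lines. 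One minor remark: you need only kill $a^2,b^2$, since $c=B^{-r}aB^r$ and $d=B^{-(q-r)}bB^{q-r}$ show that $c^2,d^2$ are conjugate in $\ker\bar\alpha$ to $a^2,b^2$ (using, e.g., $c^2=(B^{-r}a)a^2(B^{-r}a)^{-1}$ when $B^r\notin\ker\bar\alpha$), so their homology classes already coincide.
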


\begin{proof} 

To find the image of $R^{tbd}(Y_0,T_0)$ in the pillowcase, it is enough to find $H_1(\tilde{Y}_0)$, $h(ba)$, and $h(bc^{-1})$ by Theorem \ref{bd}. With this in mind, notice that $H_1(\tilde{Y}_0)$ is the quotient of $H_1(\tilde{Y}_0\setminus\tilde{T}_0)$ by the group generated by the homology class of the lifts of $a^2$ and $b^2$ to $\tilde{Y}_0\setminus\tilde{T}_0$. Now, $H_1(\tilde{Y}_0\setminus\tilde{T}_0)$ is the abelianization of $\pi_1(\tilde{Y}_0\setminus\tilde{T}_0)$ and this group is determined by the exact sequence:
$$1\to\pi_1(\tilde{Y}_0\setminus\tilde{T}_0)\to \pi_1({Y}_0\setminus{T}_0) \xrightarrow{\bar \alpha}  \ZZ/2\to 1.$$

As $\pi_1(Y_0\setminus T_0)$ is the free group $\langle A,\;B\rangle$, we can compute $\pi_1(\tilde{Y}_0\setminus\tilde{T}_0)$  and expressions for the lifts of loops in $\pi_1(Y_0\setminus T_0)$  by the Reidemeister-Schreier method, examining the 2-fold covering space of  $S^1\vee S^1$, a wedge of two circles  labelled $A,B$.

From Equation (\ref{words}) we get:
$$
1=(s+p)\bar{\alpha}(A)+(q-r)\bar{\alpha}(B) \text{~and~}
1=s\bar{\alpha}(A)-r\bar{\alpha}(B)
$$
 or equivalently
\begin{equation}\label{4.1}
0=p\bar{\alpha}(A)+q\bar{\alpha}(B)\text{~and~}
 1=s\bar{\alpha}(A)-r\bar{\alpha}(B).
\end{equation}

We thus see that $\bar\alpha(A)$ and $\bar\alpha(B)$ in $\ZZ/2$ are determined by the parities of $p,\,q,\,r$, and $s$.\\

If $p$ is even (and hence $q, s$ odd), Equation (\ref{4.1}) shows that $\bar{\alpha}(A)=1$ and $\bar{\alpha}(B)=0$. The corresponding 2-fold covering space of $S^1\vee S^1$ is illustrated in Figure \ref{cover}, where the lifts of  $A$ are the 1-cells $A_1,A_2$ and the lifts of   $B$ are the loops $B_1,B_2$.
\begin{figure}[h!]
\centering
\def\svgwidth{0.5\columnwidth}
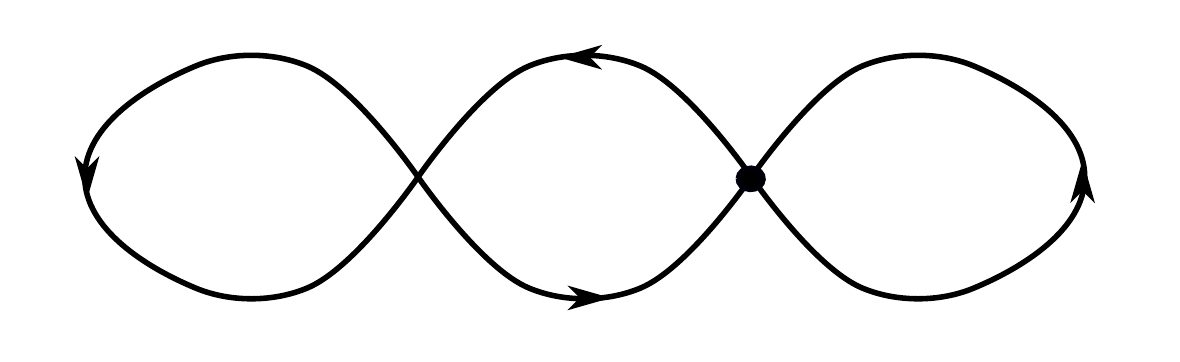
\caption{$\tilde{Y}_0\setminus\tilde{T}_0$ for $p$ even\label{cover}}
\end{figure}

The fundamental group $\pi_1(\tilde{Y}_0\setminus\tilde{T}_0)$ is free on $ x=B_1,\; y=A_1A_2,\; z=A_2^{-1}B_2A_2$.  By the Seifert-Van Kampen theorem,
$\pi_1(\tilde{Y}_0)$ is obtained by taking the quotient by the normal subgroup generated by $a^2$ and $b^2$.
Lifting $a^2$, $b^2$ to $\tilde {a} ^2,\tilde {b} ^2\in \pi_1(\tilde{Y}_0\setminus\tilde{T}_0)$ we find, using Equation (\ref{words}) and Figure \ref{cover}, that  $$
\tilde {a} ^2 =y^{(s+p+1)/2}z^{q-r}y^{(s+p-1)/2}x^{q-r}\text{~and~}
\tilde {b} ^2=x^{-r}y^{(s+1)/2}z^{-r}y^{(s-1)/2}.$$

This shows that $H_1(\tilde{Y}_0)$ has the abelian presentation
$$H_1(\tilde{Y}_0)=\langle x,\;y,\;z\:\mid\: (q-r)x+(s+p)y+(q-r)z=0,\: -rx+sy-rz=0\rangle$$ or equivalently $$H_1(\tilde{Y}_0)=\langle x,\;y,\;z\:\mid\: y=0,\: x+z=0\rangle=\langle x \rangle\cong \ZZ.$$
 
Let $h\in H^1(\tilde{Y}_0)$  be the generator satisfying $h(x)=1$. Then $h(z)=-1$ and $h(y)=0$. We compute,  using Equation (\ref{words}) and Figure \ref{cover}:
\begin{align*}
\widetilde{ba}&=x^{-r}y^{(2s+p)/2}x^{q-r}\\
\widetilde{bc^{-1}}&=x^{-r}y^{(s+1)/2}z^{-q}y^{-(s+p+1)/2}x^{r}
\end{align*}
and so $$h(ba)=-2r+q \text{ and } h(bc^{-1})=q.$$ Theorem \ref{bd} then implies that the arc of binary dihedral representations is given by $$\left(\gamma(t),\theta(t)\right)=((-2r+q)t, -2rt).$$
The other cases, when $p$ is odd,  are solved in a similar manner.
\end{proof}

\subsection{Non-binary dihedral representations of torus knots} We recall (and simplify and extend) the analysis of $R(Y_0,T_0)$  for the tangle of Figure \ref{torusknot} associated to a torus knot initiated in \cite{HHK}.

 For each integer $n$ denote by   $T_n(x)$ and $S_n(x)$ the (Chebyshev) polynomials satisfying   
$$
\cos(nu)=T_n(\cos u) \text{~ and ~}\sin(nu)=\sin uS_n(\cos u).
$$ and define the functions
$$p_1(x,y,\tau)=T_{s+p}(x)T_{q-r}(y)-\sqrt{(1-x^2)(1-y^2)} S_{s+p}(x)S_{q-r}(y)~ \tau, 
$$

$$p_2(x,y,\tau)=T_{s}(x)T_{-r}(y)-\sqrt{(1-x^2)(1-y^2)} S_{s}(x)S_{-r}(y)~\tau, 
$$
 Let $\arccos:[-1,1]\to [0,\pi]$ denote the usual branch of the inverse cosine.

The following theorem essentially identifies $R(Y_0,T_0)$ with the set of common zeros of $p_1$ and $p_2$ in the unit cube. 

 \begin{thm}[\cite{HHK}, Theorem 11.1] \label{thm2} Let  $W$ denote the set of common zeros of $p_1$ and $p_2$ in the unit cube:
 $$W=\{(x,y,\tau)~|~ p_1(x,y,\tau)=0=p_2(x,y,\tau), |x|\leq 1, |y|\leq 1,|\tau|\leq 1\}.$$
 Then the assignment
 \begin{equation}
\label{wfam}w=(x,y,\tau)\in W\to  \big(r_w(A)= e^{\arccos(x)\bbi},~r_w(B)= e^{\arccos(y)e^{\arccos(\tau)\bbk}\bbi}\big)
\end{equation}
determines a   surjection
$$r:W\to R(Y_0,T_0).$$
Given $w=(x_0,y_0,\tau_0)\in W$, the fiber over $r_w$ is the single point $\{w\}$ unless 
  $|x_0|=1$ or $|y_0|=1$, in which case the fiber is the arc $(x_0,y_0,\tau),\tau\in [-1,1]$.
  \end{thm}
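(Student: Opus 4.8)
The plan is to realize $W$ as the solution set of the two conditions ``$\rho(a)$ and $\rho(b)$ are traceless,'' written out in coordinates adapted to the free generators $A,B$. First I would record the two elementary quaternion identities on which everything rests. For a unit quaternion $u=e^{\arccos(x)\bbi}$ with $x=\Real(u)$ and angle in $[0,\pi]$, the Chebyshev defining relations give
$$u^n=T_n(x)+\sqrt{1-x^2}\,S_n(x)\,\bbi,$$
and more generally, if $v=\cos\psi+\sin\psi\,\hat n$ is a unit quaternion with purely imaginary unit axis $\hat n$ and $y=\cos\psi$, then $v^n=T_n(y)+\sqrt{1-y^2}\,S_n(y)\,\hat n$. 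Second, for quaternions $P,Q$ one has $\Real(PQ)=\Real(P)\Real(Q)-\langle \vec P,\vec Q\rangle$, where $\vec P,\vec Q$ denote the imaginary parts viewed as vectors in $\RR^3$.

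Then I would take $w=(x,y,\tau)$, set $M=r_w(A)=e^{\arccos(x)\bbi}$ and $N=r_w(B)$, whose axis $\hat n=(e^{\arccos(\tau)\bbk})\bbi=\tau\,\bbi+\sqrt{1-\tau^2}\,\bbj$ satisfies $\langle \bbi,\hat n\rangle=\tau$. Applying the two identities to $\rho(a)=M^{s+p}N^{q-r}$ and $\rho(b)=N^{-r}M^{s}$, using Equation (\ref{words}), yields exactly $\Real(\rho(a))=p_1(x,y,\tau)$ and $\Real(\rho(b))=p_2(x,y,\tau)$. Hence $r_w$ sends $a,b$ to traceless quaternions precisely when $w\in W$; and since $c=B^{-r}aB^{r}$ and $d=B^{-(q-r)}bB^{q-r}$ are conjugates of $a$ and $b$, their images are automatically traceless. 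This shows $r$ is well defined into $R(Y_0,T_0)$. For surjectivity I would put an arbitrary traceless $\rho$ into standard form: $\rho$ is determined by $M=\rho(A),N=\rho(B)$, and after conjugation I may assume $M=e^{\arccos(x)\bbi}$ with $x=\Real(M)$. The residual freedom is conjugation by the circle $\{e^{t\bbi}\}$, which rotates the $\bbj\bbk$-plane while fixing the $\bbi$-axis; I would use it to rotate the axis of $N$ into the $\bbi\bbj$-plane, then conjugate by $\bbi$ if necessary (this fixes $M$) to make its $\bbj$-coordinate nonnegative. The resulting $N$ equals $r_w(B)$ with $y=\Real(N)$ and $\tau=\langle\mathrm{axis}(M),\mathrm{axis}(N)\rangle\in[-1,1]$, so $\rho$ is conjugate to $r_w$, and tracelessness forces $w\in W$ by the computation above.

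The fiber analysis is where the real care is needed, and I expect the boundary of the cube to be the main obstacle. When $|x|<1$ and $|y|<1$ both $M$ and $N$ are noncentral, so $x=\Real(M)$, $y=\Real(N)$, and $\tau=\langle\mathrm{axis}(M),\mathrm{axis}(N)\rangle$ are genuine conjugation invariants (normalizing angles to $[0,\pi]$ fixes the sign ambiguity of each axis), and $w$ is recovered from the conjugacy class; thus the fiber is $\{w\}$. When $|y_0|=1$ we have $r_w(B)=\pm1$ independently of $\tau$, so the representations $r_{(x_0,y_0,\tau)}$ are literally equal; when $|x_0|=1$ we have $r_w(A)=\pm1$ central, and although the axis of $r_w(B)$ rotates with $\tau$, all these $N$ are mutually conjugate while the central value of $A$ is unchanged, so the conjugacy classes coincide. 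In both boundary cases one checks directly that $p_1,p_2$ no longer involve $\tau$, because the factor $\sqrt{1-x^2}$ or $\sqrt{1-y^2}$ vanishes, so the whole segment $\{(x_0,y_0,\tau):\tau\in[-1,1]\}$ lies in $W$ and forms a single fiber. Assembling the three cases gives the stated fiber structure and completes the identification of $R(Y_0,T_0)$ with the common zero set $W$.
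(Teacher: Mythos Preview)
Your argument is correct and follows precisely the approach the paper indicates (the paper omits the full proof, deferring to \cite{HHK}, but notes that the essential step is to substitute the parametrized $A,B$ into $a=A^{s+p}B^{q-r}$ and $b=B^{-r}A^{s}$ and take real parts, which is exactly what you do). Your surjectivity and fiber analyses fill in the details the paper leaves implicit, and they are handled correctly.
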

We omit the proof, which can be found in \cite{HHK}, but note that the essential step is    to substitute $A=e^{\arccos(x)\bbi},~B= e^{\arccos(y)e^{\arccos(\tau)\bbk}\bbi}$   into the expressions $a=A^{s+p}B^{q-r}$ and $b=B^{-r}A^s$. Taking real parts gives the functions $p_1$ and $p_2$, which vanish exactly when the resulting representation is traceless.

\medskip
 
 It is convenient to set $$p(x,y)=T_{s+p}(x)T_{q-r}(y) S_{s}(x)S_{-r}(y)- S_{s+p}(x)S_{q-r}(y)T_{s}(x)T_{-r}(y)$$ and observe that $p(x,y)=p_1(x,y,\tau)S_{s}(x)S_{-r}(y)-p_2(x,y,\tau)S_{s+p}(x)S_{q-r}(y)$.  Thus the projection of $W$ to the $x$-$y$ plane lies in the zero set of the {\em polynomial} $p(x,y)$.  Since $p_1$ and $p_2$ are linear in $\tau$, $\tau$ can be generically eliminated from the system of equations $p_1=0, p_2=0$, so that the projection of $W$ to the zero set of $p(x,y)$ is generically injective.   The precise statement relating $W$ to the zero set of $p$ is awkward to state; see Theorem 11.1  of \cite{HHK}. But it can be summarized by saying that there is a pair of maps with domain $W$, 
 \begin{equation}
\label{2maps}Z=\{(x,y)\in[-1,1]^2~|~ p(x,y)=0\}\leftarrow W \to R(Y_0,T_0)
\end{equation}
such that the right map has generic fibers a single point, and the left map has generic fibers a single point over those $z\in Z$ so that  solving for $\tau$ yields $|\tau|\leq 1$.  In the calculations that follow we first identify $Z$, which is given as the zero set of a single {\em polynomial}, and then identify the   fibers in the  two maps of Equation (\ref{2maps}) to determine $R(Y_0,T_0)$.

 \medskip
 
 \subsection{Restriction to  the pillowcase}
 To find the image of the restriction map $R(Y_0,T_0)$ to the pillowcase $R(S^2,\{a,b,c,d\})$, we use the coordinates $(\gamma,\theta)\in [0,\pi]\times [0,2\pi]$ as indicated in Equation (\ref{eqpil}), corresponding to $a\mapsto\bbi, b\mapsto e^{\gamma\bbk}\bbi$, and $c\mapsto e^{\theta\bbk}\bbi$.  Suppose that $w\in W$.  From Equation (\ref{words}) we see that  the representation sending $A$ to $M=r_w(A)$ and $B$ to $N=r_w(B)$ as in Theorem \ref{thm2} satisfies
 $$\cos(\gamma)=-\Real(ba)= -\Real(N^{-r}M^sM^{s+p}N^{q-r})=-\Real(M^{2s+p}N^{q-2r}).$$
 Similarly
 $$\cos(\theta)=\Real(ca^{-1})=\Real(M^{s+p}N^rM^{-(s+p)}N^{-r})$$
 and
 $$\cos(\theta-\gamma)=\Real(N^{-q}M^{-p}).$$

 These three formulae (together with the angle addition formula) are   sufficient to determine $(\gamma,\theta)\in [0,\pi]\times[0,2\pi]/_\sim$ in terms of $w=(x,y,\tau)$. 
  We omit the straightforward calculation and state the result as follows.

 \begin{prop}\label{sharper} Suppose $(x,y,\tau)\in W$, so that  $$r_w(A)=e^{\arccos(x)\bbi},~ r_w(B)= e^{\arccos(y)e^{\arccos(\tau)\bbk}\bbi}$$ defines a traceless representation.  
 
 Then the image of $r_w$ in the pillowcase has coordinates $(\gamma,\theta)$, where
$$\cos(\gamma)=-T_{2s+p}(x)T_{q-2r}(y) +\sqrt{(1-x^2)(1-y^2)}\, S_{2s+p}(x)S_{q-2r}(y)\, \tau$$
$$\cos(\theta)=
-2 T_{s+p}(x)^2 T_r(y)^2 +2T_r(y)^2+ 2T_{s+p}(x)^2 -1
+2\tau^2\big(1-T_{s+p}(x)^2 -T_{r}(y)^2+ T_{s+p}(x)^2T_r(y)^2\big)
$$
$$\cos(\theta-\gamma)=T_{p}(x)T_{q}(y)-\sqrt{(1-x^2)(1-y^2)}\, S_{p}(x)S_{q}(y)\, \tau.$$ \qed
 \end{prop}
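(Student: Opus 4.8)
The plan is to reduce all three formulas to explicit quaternion computations, exploiting the de Moivre form of $M=r_w(A)$ and $N=r_w(B)$ together with the three relations displayed just before the statement:
$$\cos\gamma=-\Real(M^{2s+p}N^{q-2r}),\quad \cos\theta=\Real(M^{s+p}N^rM^{-(s+p)}N^{-r}),\quad \cos(\theta-\gamma)=\Real(N^{-q}M^{-p}).$$
First I would fix notation: writing $u=\arccos(x)$, $v=\arccos(y)$, $\phi=\arccos(\tau)$, we have $M=e^{u\bbi}$ and $N=e^{v\hat n}$, where $\hat n=e^{\phi\bbk}\bbi=\cos\phi\,\bbi+\sin\phi\,\bbj$ is a unit imaginary quaternion with $\cos\phi=\tau$. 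The Chebyshev definitions $\cos(nw)=T_n(\cos w)$, $\sin(nw)=\sin w\,S_n(\cos w)$ then give, for \emph{every} integer $n$,
$$M^n=T_n(x)+\sqrt{1-x^2}\,S_n(x)\,\bbi,\qquad N^n=T_n(y)+\sqrt{1-y^2}\,S_n(y)\,\hat n,$$
since $\sin u,\sin v\ge 0$ on $[0,\pi]$.

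The core of the argument is a single \emph{master identity}: for all integers $a,b$,
$$\Real(M^aN^b)=T_a(x)T_b(y)-\sqrt{(1-x^2)(1-y^2)}\,S_a(x)S_b(y)\,\tau.$$
I would obtain this by multiplying out the two expansions above and discarding the three purely imaginary contributions; the only cross term affecting the real part is $\bbi\hat n$, whose real part is $-\cos\phi=-\tau$. With this in hand the first and third formulas are immediate. For $\gamma$, substitute $a=2s+p$, $b=q-2r$. For $\theta-\gamma$, use cyclicity of the real part to rewrite $\Real(N^{-q}M^{-p})=\Real(M^{-p}N^{-q})$, then apply the master identity with $a=-p$, $b=-q$ together with the parities $T_{-n}=T_n$ and $S_{-n}=-S_n$; the two sign changes cancel, producing exactly the stated formula.

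The genuinely nontrivial case, and the main obstacle, is $\cos\theta=\Real(M^{s+p}N^rM^{-(s+p)}N^{-r})$, a commutator to which the master identity does not directly apply. Here I would pass to the $SO(3)$ picture rather than expand by brute force. Writing $P=M^{s+p}=\cos\alpha+\sin\alpha\,\bbi$ and $Q=N^r=\cos\beta+\sin\beta\,\hat n$ with $\cos\alpha=T_{s+p}(x)$ and $\cos\beta=T_r(y)$, one has $PQP^{-1}=\cos\beta+\sin\beta\,(P\hat nP^{-1})$, so using $\Real(\hat u\hat v)=-\hat u\cdot\hat v$ for unit imaginary quaternions a short calculation gives
$$\Real(PQP^{-1}Q^{-1})=\cos^2\beta+\sin^2\beta\,\bigl((P\hat nP^{-1})\cdot\hat n\bigr).$$
Since conjugation by $P$ rotates $\hat n$ by angle $2\alpha$ about the axis $\bbi$, Rodrigues' formula yields $(P\hat nP^{-1})\cdot\hat n=\cos 2\alpha+(1-\cos 2\alpha)(\bbi\cdot\hat n)^2$, and $\bbi\cdot\hat n=\cos\phi=\tau$. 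Substituting $\cos 2\alpha=2T_{s+p}(x)^2-1$, $\cos^2\beta=T_r(y)^2$, $\sin^2\beta=1-T_r(y)^2$ and expanding gives precisely the claimed expression for $\cos\theta$. The only care needed throughout is bookkeeping the half-angle identities and confirming that the rotation axes of $P$ and $Q$ are $\bbi$ and $\hat n$ respectively, so that the quantity $\bbi\cdot\hat n=\tau$ is exactly what feeds into the Rodrigues step.
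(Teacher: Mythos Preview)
Your argument is correct and is exactly the ``straightforward calculation'' that the paper explicitly omits: the paper only records the three starting relations $\cos\gamma=-\Real(M^{2s+p}N^{q-2r})$, $\cos\theta=\Real(M^{s+p}N^{r}M^{-(s+p)}N^{-r})$, $\cos(\theta-\gamma)=\Real(N^{-q}M^{-p})$ and then states the result. Your master identity dispatches the first and third formulas immediately, and your use of the $SO(3)$ rotation picture and Rodrigues' formula for the commutator is a clean way to handle $\cos\theta$---arguably tidier than a bare quaternion expansion, since only $\cos 2\alpha$ and $\sin^2\beta$ survive and the sign ambiguities in $\sin\alpha,\sin\beta$ never enter.
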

 The expressions for $\cos(\gamma), \cos(\theta), \cos(\theta-\gamma)$ which appear in Proposition \ref{sharper} are rational functions of $x$ and $y$:  the square roots disappear since $p_i(x,y,\tau)=0$.

 \subsection{The (4,5) torus knot}  We begin with a detailed example. In \cite{HHK} the question is asked whether the image of the restriction map to the pillowcase  $R(Y_0,T_0)\to R(S^2,\{a,b,c,d\})$ is always contained in a union of straight lines, as was shown to be true for all 2-bridge knots, all $(2,q)$ torus knots, for the $(4,3)$ torus knot, with respect to natural choices of Conway balls, (as well as for tangles associated to pretzel knots as in Section \ref{pretsec}   below, and, for the traceless binary dihedral representations of {\em any} tangle, by Theorem \ref{bd}).  
 We show that  the image  $R(Y_0,T_0)\to R(S^2,\{a,b,c,d\})$ is non-linear for the tangle of Figure \ref{torusknot} associated to the $(4,5)$ torus knot.

 Remove a Conway ball from $S^3$ as in Figure \ref{torusknot}, by taking 
 $(p,q,r,s)=(4,5,4,-3)$.  Then $s+p=1$ and $q-r=1$, so that
 $$p_1(x,y,\tau)=T_1(x)T_1(y)  -\sqrt{(1-x^2)(1-y^2)}S_1(x)S_1(y) \, \tau =xy-\sqrt{(1-x^2)(1-y^2)}\, \tau,$$
 \begin{eqnarray*}
p_2(x,y,\tau)&=&T_{-3}(x) T_{-4}(y)  -\sqrt{(1-x^2)(1-y^2)}S_{-3}(x) S_{-4}(y) \, \tau \\
&=& 
  \left( 4\,{x}^{3}-3\,x \right)  \left( 1+8\,{y}^{4}-8\,{y}^{2}
 \right) -\sqrt { \left( 1-{x}^{2} \right)  \left( 1-{y}^{2} \right) }
 \left( 1-4\,{x}^{2} \right)  \left( -8\,{y}^{3}+4\,y \right) \, \tau,
\end{eqnarray*}
and
$$p(x,y)=x \left( 16\,{y}^{4}-20\,{y}^{2}+16\,{x}^{2}{y}^{2}-4\,{x}^{2}+3
 \right)
.$$

 Thus   $Z$, the zero set of $p(x,y)$ in the unit square $[-1,1]^2$, is the union of an arc $Z_1=\{(0,y)~|~-1\leq y\leq 1\}$ and the zero set $Z_2$ of $16\,{y}^{4}-20\,{y}^{2}+16\,{x}^{2}{y}^{2}-4\,{x}^{2}+3$. Denote by $W_1$ those triples $(x,y,\tau)\in W$ so that $(x,y)\in Z_1$, and similarly $W_2$ the points lying above $Z_2$.  Clearly
 $$W_1=\{(0,y,0)~|~ y\in[-1,1]\}\cup \{(0,\pm1, \tau)~|~\tau\in[-1,1]\}.$$
 Thus by Theorem \ref{thm2}, the image $W_1\to R(Y_0,T_0)$ is the arc of representations 
 $$y\in [-1,1]\mapsto (M= \bbi, N=e^{\arccos(y)\bbj})$$
This path is  conjugate (by $e^{\frac{\pi}{4}\bbi}$) to a path of binary dihedral representations, and in fact is the arc $R^{tbd}(Y_0,T_0)$ provided by Theorem \ref{bd}. Theorem \ref{thm3.1} gives the image in the pillowcase: it is the straight line segment $(\gamma(t),\theta(t))=(-3t, -8t), t\in[0,\pi]$. This is illustrated by the  blue component in Figure \ref{fig45}.
 
 The component $W_2$ corresponds to the non-binary dihedral representations, and it is more complicated to analyze.
Choose a point $(x,y,\tau)\in W_2$.  Suppose $|x|=1$, then using $p_1=0$ we see that $y=0$, and so $p(x,y)=\pm 1\ne 0$. If $|y|=1$, then $x=0$, but then
$16\,{y}^{4}-20\,{y}^{2}+16\,{x}^{2}{y}^{2}-4\,{x}^{2}+3\ne 0$ contradicting $(x,y,\tau)\in W_2$. Thus $|x|<1$ and $|y|<1$.   Theorem \ref{thm2} then shows that $W_2$ injects into $R(Y_0,T_0)$.

The vanishing of $p_1$  implies that $\tau$ is equal to $xy/\sqrt{(1-x^2)(1-y^2)}$.
Since $|\tau|\leq 1$, it follows that $x^2+y^2\leq 1$.
 
 Solving $16\,{y}^{4}-20\,{y}^{2}+16\,{x}^{2}{y}^{2}-4\,{x}^{2}+3=0$ for $x^2$ one obtains
 $$x^2=\frac{16y^4-20y^2+3}{4-16y^2}=1-y^2 - \frac{1}{4-16y^2}.$$
Then $x^2+y^2\leq1$ implies that $4-16y^2>0$, so that $y^2<\tfrac{1}{4}$. Furthermore $x^2\ge 0$, so that  $16y^4-20y^2+3\ge 0$, which holds precisely for 
$$y\in [-\sqrt{\tfrac{5-\sqrt{13}}{8}},\sqrt{\tfrac{5-\sqrt{13}}{8}}].$$

Hence $W_2$ is homeomorphic to a circle, the union of two arcs given by
\begin{equation}
\label{eq2}
y\in [-\sqrt{\tfrac{5-\sqrt{13}}{8}},\sqrt{\tfrac{5-\sqrt{13}}{8}}],~x=\pm  \sqrt{ 1-y^2 - \tfrac{1}{4-16y^2}},~\tau=\tfrac{xy}{\sqrt{(1-x^2)(1-y^2)}}.
\end{equation}
Also, $W_2$ meets $W_1$ at the two endpoints   of these arcs
$(0,\pm\sqrt{\frac{5-\sqrt{13}}{8}} ,0)$.  Hence $R(Y_0,T_0)$ is homeomorphic to the letter $\phi$.

  To compute the image of $W_2$ in the pillowcase, we use  Proposition \ref{sharper}, which gives:
$$ \cos(\gamma)= -3\,y+4\,{x}^{2}y+4\,{y}^{3}$$
  $$\cos(\theta)=128\,{y}^{8}+128\,{y}^{6}{x}^{2}-256\,{y}^{6}-128\,{y}^{4}{x}^{2}+160
\,{y}^{4}-32\,{y}^{2}+32\,{x}^{2}{y}^{2}+1$$
$$\cos(\theta-\gamma)=-y \left( -16\,{y}^{4}+64\,{y}^{4}{x}^{2}-112\,{x}^{2}{y}^{2}+20\,{y}^
{2}+64\,{x}^{4}{y}^{2}-32\,{x}^{4}+36\,{x}^{2}-5 \right)
$$
Substituting the solution $x^2=\frac{16y^4-20y^2+3}{4-16y^2}$ into these expressions yields
$$\cos(\gamma)={\frac {4{y}^{3}}{-1+4\,{y}^{2}}}$$
$$\cos(\theta)={\frac {-1+12\,{y}^{2}+32\,{y}^{6}-32\,{y}^{4}}{-1+4\,{y}^{2}}}$$
$$\cos(\theta-\gamma)={\frac {4y \left( 16\,{y}^{6}-20\,{y}^{4}+9\,{y}^{2}-1 \right) }{
 \left( -1+4\,{y}^{2} \right) ^{2}}}.
$$
  Finding a simple expression for $\gamma$  and $\theta$   in terms of $y$ is a challenge, but one can numerically compute   at a few sample points $y_i\in  [-\sqrt{\tfrac{5-\sqrt{13}}{8}},\sqrt{\tfrac{5-\sqrt{13}}{8}}]$ to see that $\gamma$ and $\theta$ are not linearly related.  In fact, the curve $(\gamma(y), \theta(y))$ has slope approximately $-1.09$ near $y=\pm\sqrt{\tfrac{5-\sqrt{13}}{8}}$ and is vertical at $y=0$.  Thus, for this tangle, the image of the restriction $R(Y_0,T_0)\to R(S^2,\{a,b,c,d\})$ is {\em not} contained in a union of straight lines.
  
   The space $R(Y_0,T_0)$ and its image in the pillowcase is illustrated in Figure \ref{fig45}. In this figure the blue arc represents the image of the component $W_1$, and the red curve  corresponds to the image of each of the two arcs in $W_2$ corresponding to the two possible choices of square root of $x^2$.  
  \begin{figure}
\begin{center}
\raisebox{.6in}{ \includegraphics[height=1.7in] {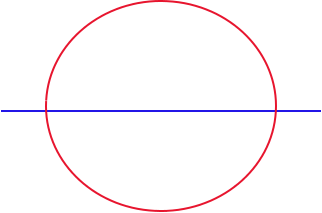}} \includegraphics[height=3in] {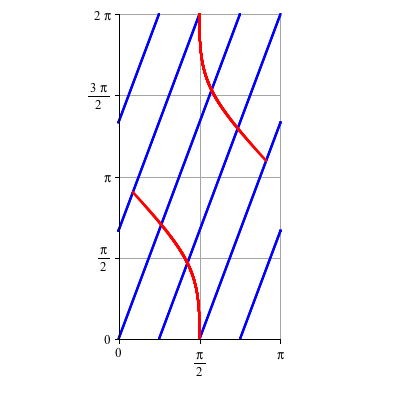}
\caption{$R(Y_0,T_0)$ and its image in the pillowcase for $(p,q,r,s)=(4,5,4,-3)$ \label{fig45} }
\end{center}
\end{figure}
 
The  image of the red curve intersects $R^\nat_\pi(Y_1,T_1)$  (illustrated by the brown curve in Figure \ref{pert}) in the pillowcase    in  two points.  This has four preimages in $R(Y_0,T_0)$: each intersection point gives rise  to a pair on the red circle (symmetric through the vertical axis) corresponding to the two square roots of $x$. The blue curve intersects the brown curve in five points, making a total of $9=|-8|+1$ generators for the instanton complex $IC^\nat(T_{4,5})$, corresponding (see \cite{HHK}) to the fact that the knot signature of $T_{4,5}$ equals $-8$. Five of these generators are binary dihedral, and four are not.    These generators and their images in the pillowcase are illustrated in Figure \ref{fig45a}. In this figure we have drawn the pillowcase as a 2-sphere with four singular points.  The brown curve represents $R^\nat_\pi(Y_1,T_1)$. This is the picture corresponding to Diagram (\ref{SVKD}).

  \begin{figure}
\begin{center}
\includegraphics[height=1.9in] {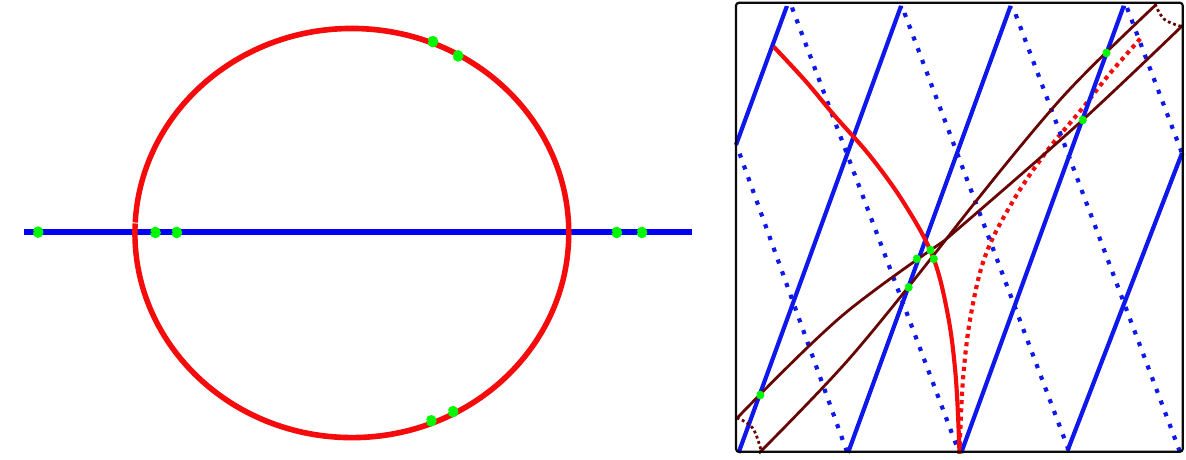}
\caption{$R(Y_0,T_0)$ and its intersection with $R^\nat_\pi(Y_1,T_1)$ in the pillowcase for the $(4,5)$ torus knot. \label{fig45a} }
\end{center}
\end{figure}

It is known (\cite{KM-filtrations}, see also \cite[Section 12]{HHK},  that the $\ZZ/4$ graded instanton homology $I^\nat(T_{4,5})$ has ranks $2,1,2,2$ in gradings $0,1,2,3$ (in brief we write $I^\nat(T_{4,5})=(2,1,2,2)$) so that a non-zero differential must kill a pair of these generators.  
 
\subsection{The (3,7) torus knot} 
In subsequent examples we suppress most details; calculations were carried out using a computer algebra package by precisely the same method described for the (4,5) torus knot. 

For this example  we  take $(p,q,r,s)=(3,7,5,-2)$.    Then 
$$p_1(x,y,\tau)=-x+2\,x{y}^{2}-2\, \sqrt {(1-{y}^{2})(1-{x}^{2})} y\tau,$$
  \begin{multline*}
 p_2(x,y,\tau)= 
-16\,{y}^{5}+20\,{y}^{3}-5\,y+32\,{x}^{2}{y}^{5}-40\,{x}^{2}{y}^{3}+10
\,{x}^{2}y\\ 
 -\sqrt {(1-{y}^{2})(1-{x}^{2})}\big( 2\,      +32\,   {y}^{4}    -24\,    {y}^{2}\big)x\tau,
\hskip2in  \end{multline*}
and
$$p(x,y)=8\,{x}^{2}{y}^{2}-2\,{x}^{2}+32\,{y}^{6}-40\,{y}^{4}+10\,{y}^{2}.$$
The zero set $Z=\{p=0\}$ is a disjoint union of two circles and the arc of Theorem \ref{thm3.1}.    The two circles are exchanged by the  generator of the $\ZZ/2^2$ action on $R(Y_0,T_0)$ which acts trivially on the pillowcase, and hence the two  circles  map  to the same curve (a figure 8) in the pillowcase. This is illustrated in the Figure  \ref{fig37}.

  \begin{figure}
\begin{center}
\raisebox{.6in}{ \includegraphics[height=1.3in] {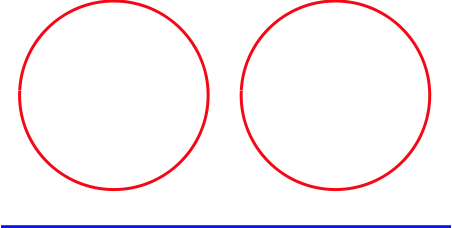}} 
\includegraphics[height=2.7in] {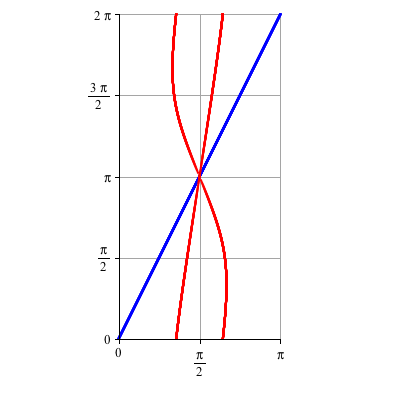}
\caption{$R(Y_0,T_0)$ and its image in the pillowcase for $(p,q,r,s)=(3,7,5,-2)$ \label{fig37} }
\end{center}
\end{figure}
By intersecting with the immersed circle of Figure \ref{pert}, we see that the instanton chain complex is generated by 9 generators: one binary dihedral on the blue arc and eight non-binary dihedral generators on the red circles. For this knot, there are no non-zero differentials in the chain complex and the reduced instanton homology has rank 9 \cite[Section 12.4]{HHK}. Kronheimer-Mrowka construct a spectral sequence converging to the reduced instanton homology of a knot $K$ with $E_2$ term the reduced Khovanov homology of the mirror image of $K$. This shows that the instanton homology for this knot is $(3,2,2,2)$. It seems reasonable to speculate (see \cite[Section 12.6]{HHK}) that the binary dihedral generator contributes in grading 0, and that each red circle contributes four generators, one in each grading.

\subsection{The $(3,10)$ torus knot} We take $(p,q,r,s)=(3,10,7,-2)$.  
Then 
$$ p_1(x,y,\tau)=4\,x{y}^{3}-3\,xy+\sqrt {(1-{y}^{2})(1-{x}^
{2})}\tau(1-4y^2)
$$
and 
 \begin{multline*}p_2(x,y,\tau)=-64\,{y}^{7}+112\,{y}^{5}-56\,{y}^{3}+7\,y+128\,{x}^{2}{y}^{7}-224\,{x
}^{2}{y}^{5}+112\,{x}^{2}{y}^{3}-14\,{x}^{2}y \\  
+   \sqrt {(1-{y}^{2})(1-{x}^
{2})}x\tau    \big(2\, -128\, {
y}^{6}+160\, {y}^{4}-48\,
{y}^{2}\big)
\hskip1in  \end{multline*}

The space $R(Y_0,T_0)$ consists of an arc of binary dihedral representations, a circle meeting this arc in two points, and two more  disjoint circles. The two disjoint circles are immersed into the pillowcase with the same image. The circle meeting the arc is folded and is mapped 2-1 except at the points along the embedded arc of binary dihedral representations. 

  \begin{figure}
\begin{center}
\raisebox{.3in}{ \includegraphics[height=2.3in] {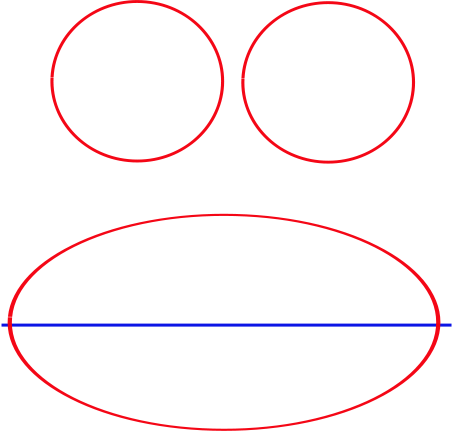}} 
\includegraphics[height=2.7in] {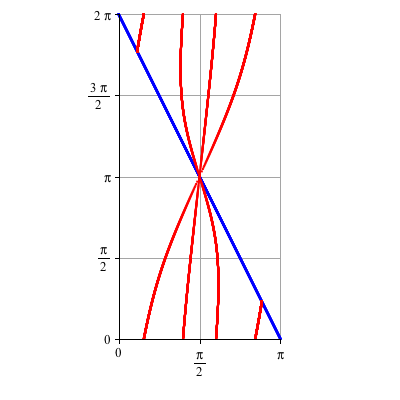}
\caption{$R(Y_0,T_0)$ and its image in the pillowcase for $(p,q,r,s)=(3,10,7,-2)$ \label{fig310} }
\end{center}
\end{figure}
From the figure one sees three generators for the instanton chain complex on the blue binary dihedral arc, four on the first circle, and four on each of the two disjoint circles, for a total of 15 generators. The reduced instanton homology has rank 13, in fact is given by $(4,3,3,3)$.  Thus the chain complex has a non-trivial differential.

\subsection{The $(4,7)$ torus knot}
Figure \ref{fig47} shows $R(Y_0,T_0)$ and its image in the pillowcase for $(p,q,r,s)=(4,7,2,-1)$.  The space $R(Y_0,T_0)$ is a union of two circles and a binary dihedral arc, each circle meeting the arc in two points. The map to the pillowcase folds each circle. 

   \begin{figure}
\begin{center}
\raisebox{.6in}{ \includegraphics[height=1.4in] {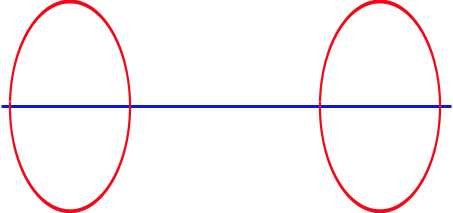}} 
\includegraphics[height=2.4in] {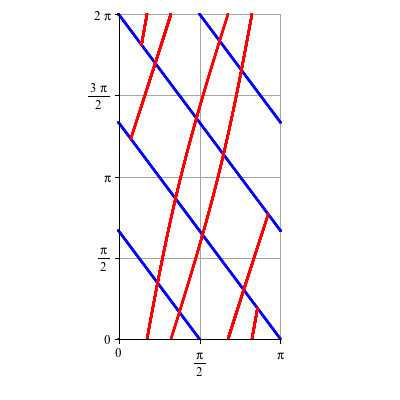}
\caption{$R(Y_0,T_0)$ and its image in the pillowcase for $(p,q,r,s)=(4,7,2,-1)$ \label{fig47} }
\end{center}
\end{figure}
One counts seven generators on the binary dihedral  arc, and four non-binary dihedral generators on each circle, for a total of 15 generators. The instanton homology of the $(4,7)$ torus knot is unknown, but has rank between 11 and 15, thus there may be non-zero differentials in the chain complex.

\subsection{The $(4,9)$ torus knot}   We take $(p,q,r,s)=(4,9,7,-3)$.  

   \begin{figure}
\begin{center}
\raisebox{.6in}{ \includegraphics[height=1.2in] {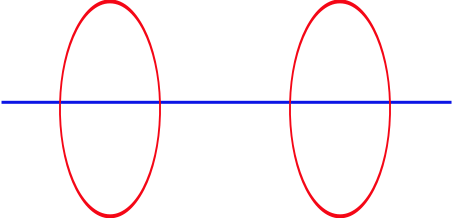}} 
\includegraphics[height=2.6in] {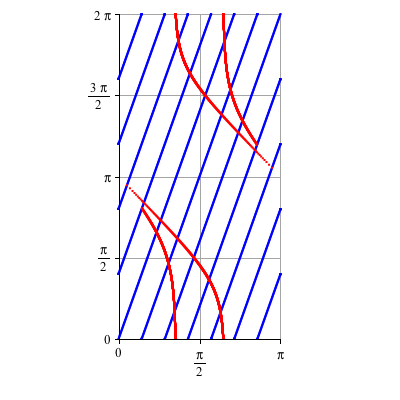}
\caption{$R(Y_0,T_0)$ and its image in the pillowcase for $(p,q,r,s)=(4,9,7,-3)$ \label{fig49} }
\end{center}
\end{figure}

From Figure \ref{fig49} we count 17 generators, four on each circle and nine on the binary dihedral arc.  The instanton complex is $(5,4,4,4)$ up to a mod 4 cyclic permutation (see \cite{HHK}). The instanton homology is unknown, but has rank between 13 and 17.

\subsection{The $(4,11)$ torus knot}  Take $(p,q,r,s)=(4,11,3,-1)$. The space $R(Y_0,T_0)$ is the union of the arc of binary dihedral representations and three circles. The map to the pillowcase folds each circle and embeds the resulting arc.  

   \begin{figure}
\begin{center}
\raisebox{.6in}{ \includegraphics[height=1.2in] {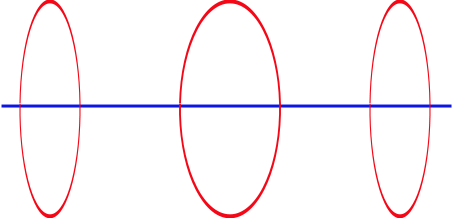}} 
\includegraphics[height=2.8in] {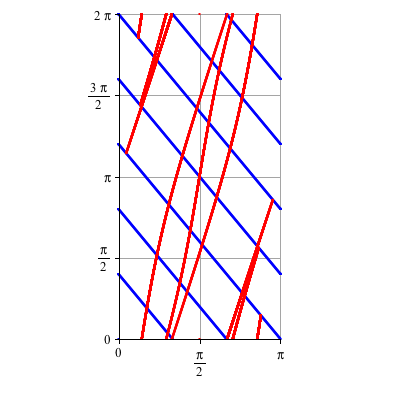}
\caption{$R(Y_0,T_0)$ and its image in the pillowcase for $(p,q,r,s)=(4,11,3,-1)$ \label{fig411} }
\end{center}
\end{figure}

The instanton complex has 11 generators on the arc, and four generators on each circle, giving 23 generators.  The instanton homology is unknown but has rank between 17 and 23.

\subsection{The $(4,13)$ torus knot}  $(p,q,r,s)=(4,13,10,-3)$
The space $R(Y_0,T_0)$ is the union of the arc of binhary dihedral representations and three circles. The map to the pillowcase folds each circle and embeds the resulting arc.   

   \begin{figure}
\begin{center}
\raisebox{.6in}{ \includegraphics[height=1.2in] {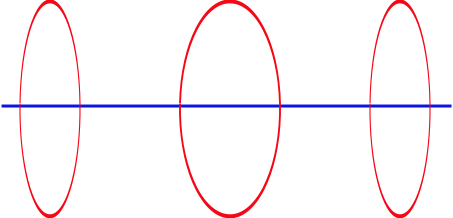}} 
\includegraphics[height=2.6in] {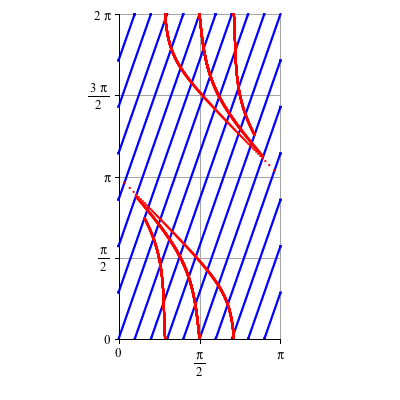}
\caption{$R(Y_0,T_0)$ and its image in the pillowcase for $(p,q,r,s)=(4,13,10,-3)$ \label{fig413} }
\end{center}
\end{figure}

The instanton complex has 13 generators on the arc, and four generators on each circle, giving 25 generators.  The instanton homology is unknown but has rank between 19 and 25.

\subsection{The $(5,7)$ torus knot}
The space $R(Y_0,T_0)$ is illustrated in Figure \ref{fig57} for $(p,q,r,s)=(5,7,3,-2)$.
The circle is immersed by a generically 2-1 immersion. One counts 17 generators for the instanton complex. All differentials are zero in this complex since the reduced instanton homology of the $(5,7)$ torus knot has rank 17.

   \begin{figure}
\begin{center}
\raisebox{.6in}{ \includegraphics[height=1.2in] {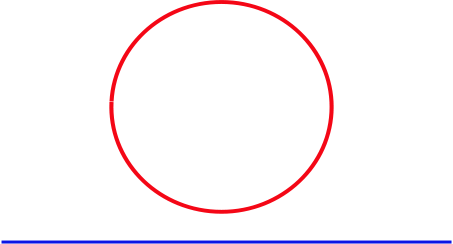}} 
\includegraphics[height=2.6in] {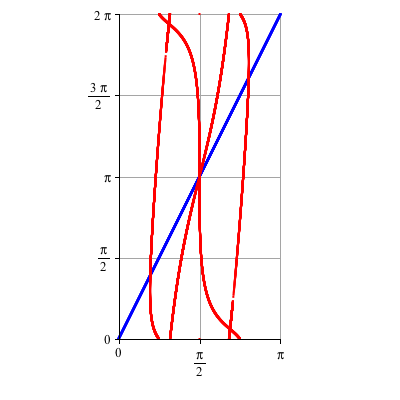}
\caption{$R(Y_0,T_0)$ and its image in the pillowcase for $(p,q,r,s)=(5,7,3,-2)$ \label{fig57} }
\end{center}
\end{figure}

\subsection{The $(5,8)$ torus knot}  We take $(p,q,r,s)=(5,8,5,-3)$.  From Figure \ref{fig58} one counts 21 generators, five on the binary dihedral arc and 16 on the circle. The instanton homology has rank either 19 or 21, so there may be a non-zero differential.

   \begin{figure}
\begin{center}
\raisebox{.6in}{ \includegraphics[height=1.2in] {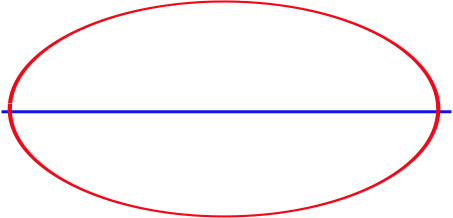}} 
\includegraphics[height=2.6in] {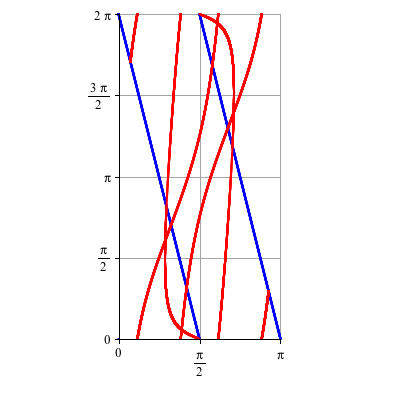}
\caption{$R(Y_0,T_0)$ and its image in the pillowcase for $(p,q,r,s)=(5,8,5,-3)$ \label{fig58} }
\end{center}
\end{figure}

\subsection{The $(5,12)$ torus knot}  We take $(p,q,r,s)=(5,12,5,-2)$. 
The two circles map to the same circle in the pillowcase.  From Figure \ref{fig512} one counts 29 generators, five on the binary dihedral arc and 12 on each circle. All differentials are zero and the reduced instanton homology is $(8,7,7,7)$, up to $\ZZ/4$ cyclic permutation.

   \begin{figure}
\begin{center}
\raisebox{.6in}{ \includegraphics[height=1.2in] {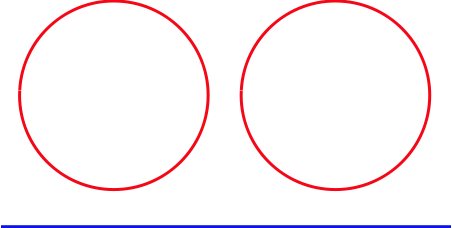}} 
\includegraphics[height=2.6in] {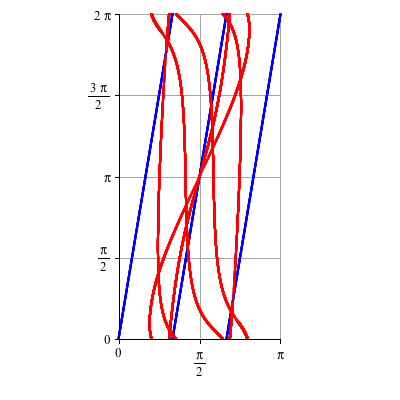}
\caption{$R(Y_0,T_0)$ and its image in the pillowcase for $(p,q,r,s)=(5,12,5,-2)$ \label{fig512} }
\end{center}
\end{figure}

\subsection{The $(6,7)$ torus knot}  We take $(p,q,r,s)=(6,7,6,-5)$.  From Figure \ref{fig67} one counts 19 generators, seven on the binary dihedral arc, four on the inner circle,  and eight on the outer circle. The instanton homology has rank between 11 and 19.

   \begin{figure}
\begin{center}
\raisebox{.6in}{ \includegraphics[height=1.2in] {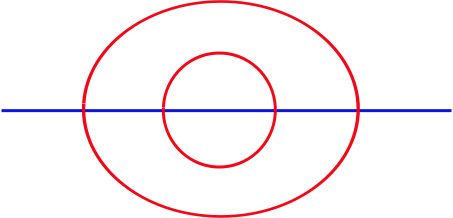}} 
\includegraphics[height=2.6in] {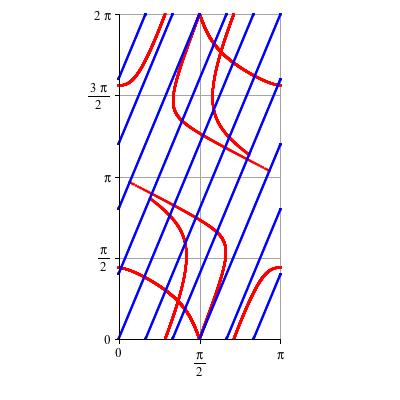}
\caption{$R(Y_0,T_0)$ and its image in the pillowcase for $(p,q,r,s)=(6,7,6,-5)$ \label{fig67} }
\end{center}
\end{figure}

\subsection{The $(5,17)$ torus knot}  Take $(p,q,r,s)=(5,17,7,-2)$. The space $R(Y_0,T_0)$ is a disjoint union of the arc of binary dihedrals and three circles.  There are 41 generators, and all differentials are zero. The instanton homology is $(11,10,10,10)$ up to cyclic permutation.

   \begin{figure}
\begin{center}
\raisebox{.6in}{ \includegraphics[height=1.2in] {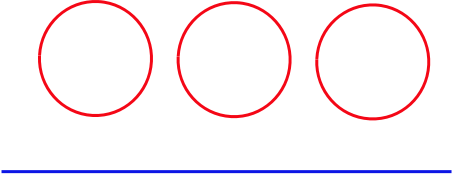}} 
\includegraphics[height=2.6in] {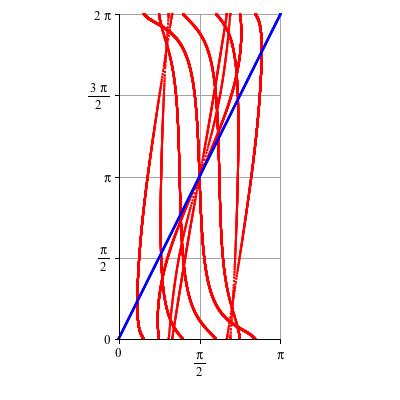}
\caption{$R(Y_0,T_0)$ and its image in the pillowcase for $(p,q,r,s)=(5,17,7,-2)$ \label{fig517} }
\end{center}
\end{figure}

\section{Tangles in Pretzel knot complements}\label{pretsec}

The space $R(S^3,K)$ for $K$ any pretzel knot is identified by Raphael Zentner in \cite{Zen}.  We sketch  what happens when one decomposes a pretzel knot along a Conway sphere, to obtain two tangles. The conclusion is that for an appropriate Conway sphere, the image  of $R(Y_0,T_0)$ in the pillowcase is contained in the union of   straight line segments of two different slopes, one slope corresponding to the binary dihedral representations as explained in Theorem \ref{bd} and the other, coming from  non-binary dihedral representations, due to the fact that the 2-fold branched cover of the tangle is Seifert-fibered. 

We examine three-stranded pretzel knots with two odd twists and one even twist for simplicity, although the argument extends to any pretzel knot. Thus take integers $p,q,r$ with $p$ even and $q,r$ odd and consider the $(p,q,r)$ pretzel knot $K$ and the 3-ball intersecting it in a trivial tangle as illustrated in Figure \ref{pqrpretzel}.  As before, we denote by $(Y_0,T_0)$ the tangle in the complementary 3-ball.
   \begin{figure}
\begin{center}
\def\svgwidth{3.5in}
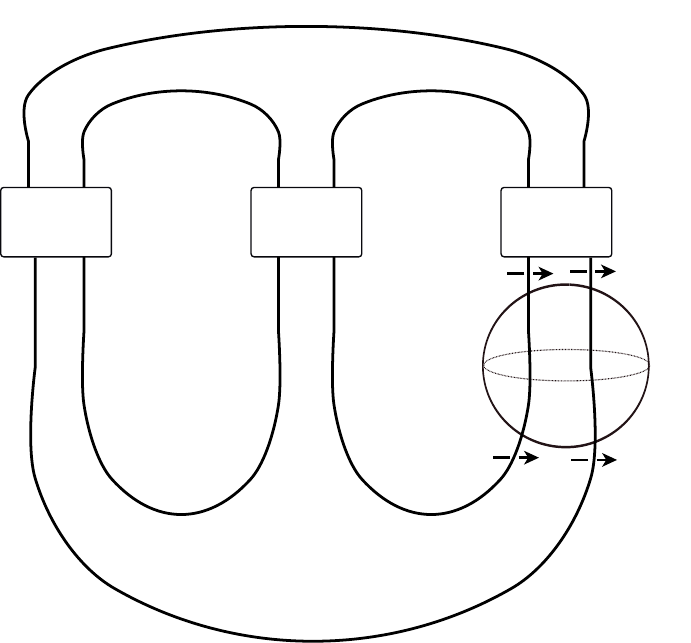
\caption{The $(p,q,r)$ pretzel knot\label{pqrpretzel} }
\end{center}
\end{figure}

Consider the  partition $R(Y_0,T_0)=R^{tbd}(Y_0,T_0)\sqcup R^n(Y_0,T_0)$ of Equation (\ref{deco}).   Theorem \ref{bd} shows that the restriction map of  $R^{tbd}(Y_0,T_0)$ to the pillowcase has image contained in a union of straight line segments of slope $(h(ba)-h(bc^{-1})/h(ba)$, where $h\in H^1(\tilde Y_0)\cong \ZZ$ is a generator.    In contrast to the situation for torus knots explored above, we have the following result.

\begin{thm} \label{naprt} The image of $R^n(Y_0,T_0)$ in the pillowcase is contained in the  union of the two straight line segments 
$$\theta=\gamma(r+1)  \text{~and~} \theta=\gamma(r+1)+\pi.$$
\end{thm}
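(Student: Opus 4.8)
The plan is to exploit the Seifert-fibered structure of the double branched cover $\tilde Y_0$, which is exactly what distinguishes the pretzel case from the torus-knot case treated above. First I would read off a presentation of $\pi_1(Y_0\setminus T_0)$ directly from Figure \ref{pqrpretzel}, recording in particular how the meridian $c$ is conjugate to $b$ by a word that traverses the $r$-twist region, so that the image $\rho(c)$ is determined by $\rho(b)$ together with the holonomy of $\rho$ across that band. Since $(Y_0,T_0)$ is a Montesinos tangle, $\tilde Y_0$ is Seifert-fibered over a disk with cone points whose orders are governed by $p,q,r$; I would identify the regular fiber $h$ as an explicit central element of $\pi_1\tilde Y_0$, using the standard Montesinos description (as in Zentner \cite{Zen}).

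Next, for the non-binary-dihedral locus $R^n(Y_0,T_0)$, I would pass through the dictionary between traceless $SU(2)$ representations of the tangle and representations of $\pi_1\tilde Y_0$ that is used throughout the paper: a traceless $\rho$ restricts to the index-two subgroup $\pi_1(\tilde Y_0\setminus\tilde T_0)$ and, because meridians square to $-1$, descends to an $SU(2)$ (equivalently $SO(3)$) representation $\sigma$ of $\pi_1\tilde Y_0$. The key rigidity input is that when $\sigma$ is irreducible the central fiber satisfies $\sigma(h)=\pm1$, and this centrality forces the images of the cone-point generators (the cores of the twist bands) into a common maximal torus, so that $\sigma$ is abelian in the fiber direction. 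The representations for which this abelian-on-the-base picture fails are precisely the ones whose image meets the normalizer of the torus nontrivially, i.e. the binary dihedral representations already accounted for by Theorem \ref{bd}; hence on $R^n(Y_0,T_0)$ the torus-valued description holds.

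From this abelian description I would compute the boundary holonomy explicitly in the pillowcase coordinates of Equation (\ref{eqpil}). Writing $\rho(b)=e^{\gamma\bbk}\bbi$ and $\rho(c)=e^{\theta\bbk}\bbi$, the relation expressing $c$ through the $r$-band conjugates $\rho(b)$ by an element lying in the common torus, i.e. rotates about a fixed axis through an angle that is a multiple of $\gamma$ governed by $r$, with the meridian crossing contributing one more unit. This yields $\theta\equiv(r+1)\gamma$, and the two possibilities $\sigma(h)=\pm1$ produce the $\pi$-shift, giving the two parallel segments $\theta=(r+1)\gamma$ and $\theta=(r+1)\gamma+\pi$.

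I expect the main obstacle to be this final extraction: turning the abstract statement ``the cone-point generators share a maximal torus'' into the exact linear relation in $(\gamma,\theta)$ requires careful bookkeeping of the twisting in each band and of the parities of $p,q,r$ (which determine how $\bar\alpha$ splits and which boundary loop carries the free parameter), and it is here that the center ambiguity $\sigma(h)=\pm1$ must be tracked so that the two lines are separated rather than collapsed. As a consistency check, the same computation restricted to the binary dihedral part should reproduce the slope $(h(ba)-h(bc^{-1}))/h(ba)$ of Theorem \ref{bd}; comparing the two slopes then confirms that the pretzel image genuinely splits into two distinct linear families.
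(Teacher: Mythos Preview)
Your overall strategy coincides with the paper's: exploit the Seifert-fibered structure of the branched double cover $\tilde Y_0$, use centrality of the regular fiber $h$ to force $\rho(h)=\pm1$ whenever the induced representation $ad\rho:\pi_1(\tilde Y_0)\to SO(3)$ is non-abelian, and identify the abelian case with the binary dihedral locus. But your middle paragraph garbles the dichotomy. The assertion that centrality of $h$ ``forces the images of the cone-point generators into a common maximal torus'' in the irreducible case is false --- if they lay in a common torus, $\sigma$ would be abelian. Nothing of the sort is needed: in the non-abelian case $\sigma(h)$ lies in the center of a non-abelian subgroup of $SO(3)$, hence $\sigma(h)=1$ and $\rho(h)=\pm1$; that single equation \emph{is} the linear constraint on $(\gamma,\theta)$. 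It is rather in the \emph{abelian} case that the image lies in a maximal torus, and the substantive step there --- which you have not sketched --- is to show that an abelian $ad\rho$ on $\pi_1(\tilde Y_0)$ forces $\rho$ itself to be traceless binary dihedral in the sense of Definition~\ref{tbddef}. The paper does this by a short case analysis on the axis ${\bf q}$ of the torus containing the image of $\rho|_{\pi_1(\tilde Y_0\setminus\tilde T_0)}$, showing ${\bf q}=\pm\bbk$ (or a degenerate $\pm\bbi$ case landing on a corner).

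The paper's extraction of the slope is also more direct than your plan. Rather than tracking how $c$ is conjugate to $b$ through the $r$-band, it writes the regular fiber as the explicit word $(ba)^{(r+1)/2}(bc^{-1})(ab)^{(r-1)/2}$ in $\pi_1(S^2\setminus\{a,b,c,d\})$ and evaluates it in the coordinates of Equation~(\ref{eqpil}), obtaining $(-1)^r e^{(\gamma(r+1)-\theta)\bbk}$. Setting this equal to $\pm1$ gives both lines at once, with no parity or band-crossing bookkeeping. Your conjugation route could perhaps be made to work, but the ``main obstacle'' you anticipate simply does not arise in the paper's argument.
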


\noindent{\sl Sketch of proof.}
It is well known \cite{burdez} that the 2-fold branched cover of $S^3$ branched over $K$ is Seifert-fibered over $S^2$. In fact, it is easy to see that if $B_p,B_q,B_r\subset S^3$ are  3-balls containing the twisted bands, then the branched cover of $S^3\setminus (B_p\cup B_q\cup B_r)$ is the product of a 3-punctured sphere and $S^1$ and the branched cover of each 3-ball is a solid torus. Thus the 2-fold branched cover $\tilde Y_0\to Y_0$ branched over $T_0$  is Seifert-fibered over $D^2$ with one (fibered) torus boundary component lying over the Conway sphere $S^2$ illustrated in Figure \ref{pretzel2}.  This is seen by isotoping $B_r$ off the band with $r$ twists.

A (regular) fiber in the boundary torus maps to the loop in $S^2\setminus \{a,b,c,d\}$ expressed as
$$ (ba)^{(r+1)/2}(bc^{-1})(ab)^{(r-1)/2}.$$ 
Hence if $\rho\in R(Y_0,T_0)$ sends $a$ to $\bbi$, $b$ to $e^{\gamma \bbk}\bbi$, and $c$ to $e^{\theta\bbk}\bbi$, the regular fiber is sent to $(-1)^re^{(\gamma(r+1)-\theta)\bbk}$.

A traceless representation $\rho:\pi_1(Y_0\setminus T_0)\to SU(2)$ restricts on the index 2 subgroup to a representation $\rho:\pi_1(\tilde Y_0\setminus \tilde T_0)\to SU(2)$ which sends the meridians of $\tilde T_0$ to $-1$.  It follows that composing with the adjoint homomorphism $SU(2)\to SO(3)$ gives a representation $ad\rho$ which send the meridians to 1, and hence extends to the branched cover $\tilde Y_0$
$$ad\rho:\pi_1(\tilde Y_0)\to SO(3). $$
We partition $R(Y_0,T_0)$ into two subsets 
$$R(Y_0,T_0)=R^a(Y_0,T_0)\sqcup R^{na}(Y_0,T_0)$$
corresponding to whether $ad\rho$ is abelian or non-abelian.

If $ad\rho$ is non-abelian, then the regular fiber, being a central element of $\pi_1(\tilde Y_0)$, must be sent to $1\in SO(3)$, and hence to $\pm 1$ by $\rho$. Thus if $\rho\in R^n(Y_0,T_0)$,   $$\pm1=(-1)^re^{(\gamma(r+1)-\theta)\bbk} $$
so that $\gamma(r+1)-\theta\equiv 0\mod \pi$.  Notice, moreover, that since $ad\rho$ is non-abelian, $\rho$ restricts to a non-abelian representation on the index 2 subgroup $\pi_1(\tilde Y_0\setminus \tilde T_0)$ and hence is not binary dihedral.  Thus $R^{na}(Y_0,T_0)\subset R^n(Y_0,T_0)$.

If, on the other hand, $ad\rho$ is abelian, then since $\tilde Y_0$ is a $\ZZ/2$ homology sphere, the image of $ad\rho$ may be conjugated to lie in a maximal torus, and hence $\rho:\pi_1(\tilde Y_0\setminus \tilde T_0)\to SU(2)$ takes values in a maximal torus in $SU(2)$.
Conjugate $\rho$ if necessary so that $\rho(a)=\bbi,~\rho(b)=e^{\gamma\bbk}\bbi$, and $\rho(c)=e^{\theta\bbk}\bbi$. Choose an imaginary quaternion ${\bf q}$ so that
the restriction $\rho:\pi_1(\tilde Y_0\setminus \tilde T_0)\to SU(2)$ takes its values in the unique maximal torus containing ${\bf q}$.  

Suppose that for some $\alpha \in\pi_1(\tilde Y_0\setminus \tilde T_0)$,  $\rho(\alpha)=e^{x{\bf q}}$ for some $x\not\equiv 0\mod \pi$.  Then $a\alpha a^{-1}$ lies in $\alpha \in\pi_1(\tilde Y_0\setminus \tilde T_0)$, so that $\rho(a\alpha a^{-1})=e^{-x{\bf \bbi q\bbi}}$.  Hence ${\bf \bbi q\bbi}=\pm {\bf q}$.  Similarly 
$e^{\gamma\bbk}\bbi {\bf q} e^{\gamma\bbk}\bbi=\pm  {\bf q}$ and  $e^{\theta\bbk}\bbi {\bf q} e^{\theta\bbk}\bbi=\pm  {\bf q}$.  This implies that either $\gamma$ and $\theta$ are multiples of $ \pi$ and ${\bf q}=\pm \bbi$, or else ${\bf q}=\pm \bbk$.  

In the first case, given any other meridian $\mu$, $\rho(\mu)\bbi=\rho(\mu a)=e^{x\bbi}$ for some $x$. Hence $\rho(\mu)=e^{(x-\frac{\pi}2)\bbi}$, which must equal $\pm\bbi$ since $\rho$ is traceless. Thus $\rho$ takes values in $\{\pm 1,\pm\bbi\}$, and hence is conjugate (by $e^{-\frac{\pi}4 \bbj}$) to a traceless binary dihedral representation. Its image in the pillowcase is one of the corner points. 

In the second case, any other meridian $\mu$ must satisfy $\rho(\mu)=e^{x\bbk}\bbi$, since $\rho(\mu)\bbi=\rho(\mu a)\in \{e^{x\bbk}\}$.  Thus $\rho$ is traceless binary dihedral.

Hence $R^a(Y_0,T_0)\subset R^{tbd}(Y_0,T_0)$. Since $R^{na}(Y_0,T_0)\subset R^n(Y_0,T_0)$ and 
$$R^{tbd}(Y_0,T_0)\sqcup R^{n}(Y_0,T_0)=R(Y_0,T_0)=R^a(Y_0,T_0)\sqcup R^{na}(Y_0,T_0),$$ it follows that 
$$R^{tbd}(Y_0,T_0)=R^a(Y_0,T_0)\text{~and~} R^{n}(Y_0,T_0)=R^{na}(Y_0,T_0).$$
In particular, the restriction $R^{n}(Y_0,T_0)\to R(S^2,\{a,b,c,d\})$ takes values in the two line segments  given by $\gamma(r+1)-\theta\equiv 0\mod \pi$.

\qed

\bigskip

The space $R(Y_0,T_0)$  for the $(-2,3,r)$ pretzel knot  for any odd positive integer $r$ is illustrated in Figure \ref{pretzel2}.  Theorem \ref{bd} shows that the subspace $R^{tbd}(Y_0,T_0)$ is an arc parametrized by $[0,\pi]$. With a bit more work one can show that the closure  of  $R^{n}(Y_0,T_0)$ is a circle meeting   $R^{tbd}(Y_0,T_0)$ in two points corresponding to the parameters $\frac{\pi}{6}$ and $\tfrac{5\pi}{6}$. The two open semicircles making up $R^{n}(Y_0,T_0)$ are exchanged by the action of the appropriate character $\chi$   
described in Proposition \ref{symmetry}.

 Theorem \ref{bd} implies that $R^{tbd}(Y_0,T_0)$ maps to the straight line segments $\theta=(r-5)\gamma,~\gamma\in [0,\pi]$.   Theorem    \ref{naprt} implies that the image of $R^{n}(Y_0,T_0)$ in the pillowcase  is  the segment  $\theta=(r+1)\gamma+\pi,~\gamma\in [\tfrac{\pi}{6},\tfrac{5\pi}{6}]$, Each of the two semi-circles of $R^{n}(Y_0,T_0)$ is mapped injectively to the interior of this arc.

   \begin{figure}
 \begin{center}{ \includegraphics[height=1.7in] {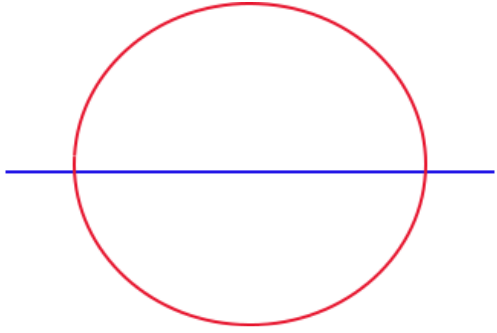}} 
 \caption{$R(Y_0,T_0)$  for the $(-2,3,r)$ pretzel knot \label{pretzel2} }
 \end{center}
\end{figure}

We illustrate in Figure \ref{237}  the explicit case for the $(-2,3,7)$ pretzel knot. From this picture one counts 9 generators for the reduced instanton complex: one on the arc of binary dihedral representations and four each on the two semi-circles $R^n(Y_0,T_0)$. The Alexander polynomial for this knot is $t^5-t^4 + t^2-t + 1-t^{-1} + t^{-2}-t^{-4} + t^{-5}$, from which Kronheimer-Mrowka's theorem implies that all differentials in the corresponding chain complex vanish.

The reduced Khovanov homology of the mirror image of this knot is computed by the  Knot theory' program \cite{BN}  to equal (3,2,2,2) mod 4. Thus there are no higher differentials in the Kronheimer-Mrowka spectral sequence, and this calculation also gives the reduced instanton homology.

    \begin{figure}
\begin{center}{ \includegraphics[height=2.7in] {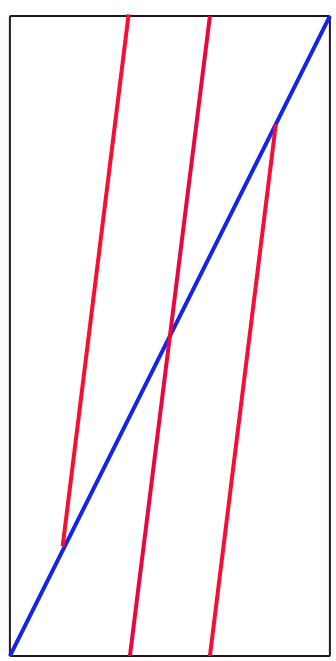}} 
\caption{The image of  $R(Y_0,T_0)$  in the pillowcase for the $(-2,3,7)$ pretzel knot \label{237} }
\end{center}
\end{figure}

For general $(-2,3,n)$ pretzel knots, intersecting $R(Y_0,T_0)$ with $R^\nat_\pi(Y_1,T_1)$  (illustrated in Figure \ref{pert}) in the pillowcase, one counts
$1+2\frac{n-7}{2}=n-6$ binary dihedral generators, and $4([\frac{5n+6}{12}]-[\frac{n+6}{12}]-1)$ non-binary dihedral generators for the reduced instanton chain complex (where $[q]$ denotes the greatest integer less than $q$).  Therefore
$$\text{Rank}~ IC^\nat(S^3,K)=n-2 +4\big([\tfrac{5n+6}{12}]-[\tfrac{n+6}{12}]). $$
 
\section{Speculation and generalization} 

The  data  produced for torus knots is consistent with the following   conjecture:

\medskip

\noindent{\bf Conjecture.}  {\sl Suppose that $K\subset S^3$ is a knot decomposed  as in Equation (\ref{decom}) with $(Y_1,K_1)$ a trivial tangle and $R(Y_0,T_0)$ non-degenerate (in the sense of \cite[Section 7]{HHK}), so that no holonomy perturbations in $Y_0$ are needed to construct the reduced instanton complex.  Suppose further that the closure of $R^n(Y_0,T_0)$ misses $R^{tbd}(Y_0,T_0)$. Then all differentials are zero in the reduced instanton complex.
}

\medskip

In an different direction, we point out that the entire picture painted in this article has a complex counterpart obtained by considering $SL(2,\CC)$ representations of 2-stranded tangles and the associated character varieties.     Indeed, set
$$R_\CC(Y_0,T_0)=\{\rho:\pi_1(Y_0\setminus T_0)\to SL(2,\CC)~|~{\text tr }\rho(\mu)=0 \text{ for each meridian } \mu\}/\hskip-.7ex / _{SL(2,\CC)},$$
where, in the context of $SL(2,\CC)$, we identify representations $\rho_1,\rho_2$ if the characters $\tr(\rho_1(\gamma))$ and $\tr(\rho_2(\gamma))$ are equal for all $\gamma\in \pi_1(Y_0\setminus T_0)$.  This is  a slightly weaker relation than conjugacy and corresponds to the GIT quotient corresponding to the conjugation action.

One can show  that $
R_\CC(S^2,\{a,b,c,d\})$
is the {\em complex pillowcase} $(\CC^\times\times \CC^\times) /\ZZ/2$, where $\ZZ/2$ acts on $\CC^\times\times \CC^\times$ by $(\lambda,\mu)\mapsto(\lambda^{-1},\mu^{-1})$.  Explicitly, the map $$ \CC^\times\times \CC^\times\to R_\CC(S^2,\{a,b,c,d\})$$ given by 
$$(\mu,\lambda)\mapsto \big( a\mapsto \begin{pmatrix} 0&1\\-1&0\end{pmatrix},~b\mapsto \begin{pmatrix} 0&\lambda\\-\lambda^{-1}&0\end{pmatrix},~c\mapsto \begin{pmatrix} 0&\mu\\-\mu^{-1}&0\end{pmatrix}\big)$$
induces a homeomorphism of $(\CC^\times\times \CC^\times) /\ZZ/2$ with 
$R_\CC(S^2,\{a,b,c,d\})$. The $SU(2)$ pillowcase embeds as the subspace $(S^1\times S^1)/\ZZ/2$.  
The image of the restriction map 
$$R_\CC(Y_0,T_0)\to R_\CC(S^2,\{a,b,c,d\})$$
gives an invariant of tangles, extending the $SU(2)$ case focused on in this article.

A variant is to consider the map 
$$R_\CC(S^2,\{a,b,c,d\})\to \CC^2,~ \rho\mapsto (\text{tr}(\rho(ba^{-1}), \text{tr}(\rho(ca^{-1})).$$ The composite 
$$R_\CC(Y_0,T_0)\to R_\CC(S^2,\{a,b,c,d\})\to \CC^2$$
is a plane curve whose defining polynomial 
gives a tangle analogue of the  $A$-polynomial of \cite{ccgls}.

\end{document}